\newtheorem{theorem}{Theorem}[section]
\theoremstyle{plain}
\newtheorem{corollary}[theorem]{Corollary}
\newtheorem{example}[theorem]{Example}
\newtheorem{lemma}[theorem]{Lemma}
\def\Pref{{\rm Pref}}
\newcommand{\gam}{\gamma}
\newcommand{\om}{\omega}
\def\Pmu{\P_{\!\!\mu}}
\def\Om{\Omega}
\newcommand{\Sig}{\Sigma}
\newcommand{\Gam}{\Gamma}
\newcommand{\R}{{\mathbb R}}
\newcommand{\Z}{{\mathbb Z}}
\def\N{{\mathbb N}}
\newcommand{\Prob}{{\mathbb P}\,}
\def\P{\Prob}
\def\Av{{\rm Av}}
\def\Ak{{\mathcal A}}
\def\be{\begin{equation}}
\def\ee{\end{equation}}
\newcommand{\Ek}{{\mathcal E}}
\newcommand{\eps}{{\varepsilon}}
\newcommand{\es}{\emptyset}
\def\ov{\overline}
\def\und{\underline}
\begin{document}

\title[Fractals defined via the semigroup generated by 2 and 3]{Dimensions of some fractals defined via the semigroup generated by 2 and 3}

\author{Yuval Peres}
\address{Yuval Peres\\ Microsoft Research, One Microsoft Way, Redmond, WA 981052, USA}
\email{peres@microsoft.com}
\author{Joerg Schmeling}
\address{Joerg Schmeling, Lund University, Sweden}
\email{joerg@maths.lth.se}
\author{St\'ephane Seuret}
\address{St\'ephane Seuret, LAMA UMR CNRS 8050, Universit\'e Paris-Est Cr\'eteil, 940010 Cr\'eteil Cedex - France}
\email{seuret@u-pec.fr}
\author{Boris Solomyak }
\address{Boris Solomyak, Box 354350, Department of Mathematics,
University of Washington, Seattle, WA 98195, USA}
\email{solomyak@math.washington.edu}

\date{\today}

\begin{abstract}
We compute the Hausdorff and Minkowski dimension of subsets of the symbolic space $\Sigma_m=\{0,...,m-1\}^{\mathbb{N}}$ that are invariant under multiplication by integers. 
The results apply to the sets $\{x\in \Sigma_m:  \forall\, k, \ x_kx_{2k}\cdots x_{n k}=0\}$, where  $n\ge 3$.
We prove that for such sets, the Hausdorff and Minkowski dimensions typically differ.

\end{abstract}

\maketitle

\section{Introduction}

Let $m\ge 2$ be an integer. A widely studied issue in dynamics consists in computing the (Hausdorff, Minkowski,\ldots) 
dimensions of subsets $X$ of  the symbolic space $\Sig_m = \{0,\ldots,m-1\}^{\N}$. 
When $X$ is a closed subset of $\Sigma_m$, invariant under the shift $x\mapsto mx$, by a well-known result of Furstenberg \cite{FURS}, 
both the Hausdorff and Minkowski dimensions of $X$ coincide with the topological entropy of the shift on $X$ divided by  $\log m$. 
This theorem covers a lot of interesting examples. Unfortunately, as soon as the set is not invariant any more, many standard techniques fail, the most 
basic example of which is
$$X_2=\{x=(x_k)_{k\geq 1}\in \Sigma_2: \, \forall k\geq 1, \ x_k x_{2k} =0\}.$$
In  \cite{KPS}, the dimensions of $X_2$ were computed. 
In particular, it is shown that the Hausdorff dimension of $X_2$ is strictly smaller than its  Minkowski dimension, this being a reflection
of the ``non-self-similarity'' resulting from its definition.

In \cite {KPS},  the key property used to study $X_2$ is that this set, though not invariant under the shift,
is nevertheless invariant under the action of multiplicative integers. More precisely, 
$$x=(x_k)_{k\geq 1} \in X_2 \ \Longrightarrow \ \forall \, i\in \mathbb{N}, \ (x_{in})_{n\geq 1} \in X_2.$$
In \cite{KPS} the Hausdorff dimension of $X_2$ and of many more general sets invariant under 
the action of multiplicative integers were computed.

The techniques developed in  \cite {KPS}, however, do not allow directly to determine the dimensions of sets such as
$$X_{2,3}=\{x=(x_n)_{n\geq 1}\in \Sigma_2: \, \forall k\geq 1, \ x_k x_{2k} x_{3k} =0\},$$
which is  also invariant under the multiplication by integers. 
Roughly speaking, \cite{KPS} relied on the fact that the condition $(x_j)_{j\ge 1}\in X_2$ ``splits'' into independent conditions along
geometric progressions of ratio 2, namely, that the sequence $(x_{i 2^k})_{k\ge 0}$ contains no two consecutive 1's for any odd $i$.
 In order to understand the structure of $X_{2,3}$, we will need to work with the semigroup generated by 
2 and 3 instead of the cyclic semigroup $\{2^k\}_{k\ge 0}$.

\medskip

Finding the dimensions of sets like $X_{2,3}$ is related to the general question of 
{\em multiple ergodic averages}: let $T:X\to X$ be a dynamical system, and $f:X^\ell\to \R$ a H\"older continuous potential 
($\ell\geq 1$ being an integer). Classical questions concern the possible limits, for $x\in X$,  
of the multiple ergodic averages defined  by
\be \label{mult-erg}
S_n  \, f (x) =  \frac{1}{n} \sum_{k=0}^{n-1} f(T^kx,T^{2k}x, ... , T^{\ell k}x),
\ee
when $n$ goes to infinity. Furstenberg, see \cite{FURS2}, 
introduced such non-conventional ergodic sums in his proof of the existence of arithmetic progressions
of arbitrary length in sets of positive density (Sz\'emeredi Theorem). 
A natural extension of classical multifractal analysis consists in  investigating the (dimensions of the) sets 
$$E_f(\alpha):= \{x: \lim_{n\to +\infty} S_n\, f(x) =\alpha\}.$$
These questions have been investigated  by many authors, see e.g.\ \cite{BOUR,HOSTKRA}, and more recently in \cite{kifer,FLM,KPS,PS,FSW}. 
Our set  $X_{2,3}$ is contained in, and can be shown to have the same dimension as, the set $E_f(0)$ in the simple case where $\ell=3$ and 
$f(x,y,z)=x_1y_1z_1$. In fact, the paper \cite{FLM}, which raised the question of computing the Hausdorff dimension of $X_2$, 
was the motivation for \cite{KPS}, where $\dim_H(X_2)$ was determined. 
In turn, the authors of \cite{FSW}, building in part on \cite{KPS},  were able to compute the multifractal spectrum
of certain ``double'' ergodic averages, that is, when $\ell=2$ in (\ref{mult-erg}). (Independently, some special cases were handled in \cite{PS}.)
We hope that the methods developed in the current paper
will make it possible to perform a similar analysis for an arbitrary $\ell\in \N$.

\medskip

The goal of this paper is to understand the structure of the sets, such as
\be \label{def-xgen}
 X^{(m)}_{n_1,\ldots,n_r}:=
\left\{{(x_k)}_{k=1}^\infty \in \Sig_m:\ x_{kn_1} x_{kn_2} \cdots x_{kn_r}=0, \ \forall\,k\in \N \right\},
\ee
where $n_1,\ldots,n_r$ are arbitrary distinct positive integers, in particular,
\be \label{def-xxn}
X_{2,3,\ldots,n}=X_{2,3,\ldots,n}^{(2)}=  \left\{{(x_k)}_{k=1}^\infty \in \Sig_2:\ x_k x_{2k} \cdots x_{nk}=0, \ \forall\,k\in \N \right\}.
\ee
First, we represent the Minkowski dimension as the sum of a series. For the Hausdorff dimension, we obtain, on the one hand, a variational formula; and on the other hand, a formula based
on a system of nonlinear equations on an infinite tree.  This tree has levels naturally indexed by a sub-semigroup of the multiplicative positive integers (e.g. the semigroup generated by 2 and 3).
The formulas are complicated (more so than in \cite{KPS}), but this seems
unavoidable. In any case, they allow reasonably accurate numerical estimates. Perhaps more importantly, they yield a
qualitative result: the Hausdorff dimension is strictly less than the Minkowski dimension for all sets of the form
(\ref{def-xgen}).

The paper is organized as follows. 
In Section 2 we present precise statements of the results (Theorems \ref{th-mink} and \ref{th-var}).  Section 3  contains some preliminary results, 
Sections 4, 5 and 6  give the proofs: the Minkowski dimension in Section 4, and the lower and upper bound for the Hausdorff dimension in Sections 5 and 6 respectively. Finally, Section 7 contains some numerical estimations and  further examples.

\section{Statement of results}

Let   $J\ge 2$ be an integer. 
Consider the semigroup $S=\langle p_1,\ldots,p_J\rangle$ 
generated by distinct  primes $p_1,\ldots,p_J$. Denote by $\ell_k$ the $k$-th element of $S$, so that 
$$
S = \{\ell_k\}_{k=1}^\infty,\ \ 1=\ell_1 < \ell_2 < \ldots
$$

\medskip

\noindent {\bf Notation.} We write
$
(i,S)=1
$
if and only if $p_j\nmid i$ for all $j\le J$ (in other words, $i$ is mutually prime with all elements of $S$).
Observe that
\be \label{eq-disj}
\N = \bigsqcup\{iS:\ (i,S)=1\}
\ee
is a disjoint union. 

\medskip

To each element $x =
{(x_k)}_{k=1}^\infty \in \Sig_m$, one can associate the subsequence  $x|_{iS}$, viewed as an element of $\Sig_m$, defined as  
$$
x|_{iS}:={(x_{i\ell_k})}_{k=1}^\infty.
$$ 

Given a closed subset $\Omega\subset\Sigma_m$ let
\begin{equation} \label{def-Xom}
X_\Om^{(S)}:= \Bigl\{x =
{(x_k)}_{k=1}^\infty \in \Sig_m:\ x|_{iS} \in \Om\ \ \mbox{for all}\ i,\ (i,S)=1\Bigr\}.
\end{equation}

In this article, we obtain formulas for the Minkowski and Hausdorff dimensions of $X_\Om^{(S)}$. Note that the case when $J=1$ (the 
semigroup $S$ is cyclic) was considered in \cite{KPS}. Our main example is the set from the Introduction
$$
X_{2,3}:= \left\{{(x_k)}_{k=1}^\infty \in \Sig_2:\ x_k x_{2k} x_{3k}=0, \ \forall\,k\in \N \right\}=X_\Om^{(S)},
$$
for which $S$ is the semigroup generated by 2 and 3, and
$$
\Om=\left\{{(\om_k)}_{k=1}^\infty  \in \Sig_2:\ \forall \, i\geq 1, \ \om_i \om_j \om_k=0\ \mbox{if}\ 2\ell_i=\ell_j,\ 3 \ell_i = \ell_k\right\}.
$$
More generally, the sets  $X_{2,3,\ldots,n}$ defined by (\ref{def-xxn}),
correspond to the case where  $S$ is the semigroup generated by all primes less than or equal to $n$ and
$$
\Om=\left\{{(\om_k)}_{k=1}^\infty  \in \Sig_2:\ \om_{i_1} \om_{i_2}\cdots \om_{i_n}=0\ 
\mbox{if}\ j\ell_{i_1}=\ell_{i_j},\ j=1,\ldots,n \right\}.
$$
Even more generally, our set-up includes the sets defined in (\ref{def-xgen}):
$$
X^{(m)}_{n_1,\ldots,n_r}=
\left\{{(x_k)}_{k=1}^\infty \in \Sig_m:\ x_{kn_1} x_{kn_2} \cdots x_{kn_r}=0, \ \forall\,k\in \N \right\}
$$
for arbitrary $n_1,\ldots,n_r\in \N$. In fact, $X^{(m)}_{n_1,\ldots,n_r}=X_\Om^{(S)}$,
where $S$ is the semigroup generated by all prime factors of the numbers 
$n_1,\ldots,n_r$ and 
$$
\Om=\left\{{(\om_k)}_{k=1}^\infty  \in \Sig_m:\ \om_{i_1} \om_{i_2}\cdots \om_{i_r}=0\ 
\mbox{if}\ n_j\ell_{i_1}=\ell_{i_j},\ j=1,\ldots,r\right\}.
$$

Throughout the paper, we fix the standard metric on $\Sig_m$: 
$$ 
\varrho((x_k)_{k\ge 1}, (y_k)_{k\ge 1})=m^{-\min\{n:\ x_n\ne y_n\}}.
$$ 
All the dimensions are computed with respect to this metric. 
It is well-known that if we map $\Sig_m$ onto $[0,1]$ via the base-$m$ expansion, the dimensions of a subset of $(\Sig_m,\varrho)$ and its
image on the real line (with respect to the Euclidean metric) are the same.

\medskip

Next we continue with the general set-up and
consider the tree of prefixes of the set $\Om$. It is a directed graph $\Gam=\Gam(\Om)$ whose
set of vertices is
$$
V(\Gam) = \Pref(\Om) = \bigcup_{k=0}^\infty \Pref_k(\Om),
$$
where $\Pref_0(\Om)$ has only one element, the empty word $\varnothing$, and
$$\Pref_k(\Om) = \left\{u\in \{0,\ldots,m-1\}^k,\ \Om \cap [u]\ne \es\right\}.$$
Here and below we denote by $[u]$ the cylinder set of all sequences starting with $u$.
There is a directed edge from a prefix $u$ to a prefix $v$ if $v=ui$ for some $i\in \{0,\ldots,m-1\}$.
In addition, there is an edge from $\varnothing$ to every $i\in \Pref_1(\Om)$.
Clearly, $\Gam(\Om)$ is a tree, and there is at least one edge going out of every vertex.
Denote
\be
\label{defak}
A_k = |\Pref_k(\Om)|.
\ee
Let 
\be \label{eq-fs}
\gam(S):= \sum_{k=1}^\infty \frac{1}{\ell_k}\,.
\ee
Observe that 
\be \label{eq-fs2}
\gam(S) = \prod_{j=1}^J \sum_{k=0}^\infty \frac{1}{p_j^k}= \prod_{j=1}^J {\Bigl(1-\frac{1}{p_j}\Bigr)}^{-1}.
\ee

\begin{theorem} \label{th-mink}
The Minkowski dimension of the set $X_\Om^{(S)}$, defined by (\ref{def-Xom}), equals
\begin{eqnarray*}
\dim_M(X_\Om^{(S)})  & = & \gam(S)^{-1} \sum_{k=1}^\infty \log_m(A_k)\Bigl(\frac{1}{\ell_k} - \frac{1}{\ell_{k+1}}\Bigr) \\
                     & = & \gam(S)^{-1} \Bigl( 1 + \sum_{k=1}^\infty \frac{\log_m(A_{k+1}/A_k)}{\ell_{k+1}}\Bigr).
\end{eqnarray*}
\end{theorem}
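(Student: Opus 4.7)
The plan is to count, for each $n$, the number $N_n$ of length-$n$ cylinders in $\Sig_m$ that meet $X_\Om^{(S)}$, and then conclude via $\dim_M(X_\Om^{(S)})=\lim_{n\to\infty}n^{-1}\log_m N_n$, since the $m^{-n}$-balls of $\Sig_m$ are precisely the length-$n$ cylinders. The decisive observation, coming from (\ref{eq-disj}), is that the defining condition on $X_\Om^{(S)}$ splits across the fibers $iS$ with $(i,S)=1$: restricted to coordinates with index $\le n$, the fiber indexed by $i$ contains $K(n,i):=\max\{k:i\ell_k\le n\}$ positions, and these positions are constrained to form a word in $\Pref_{K(n,i)}(\Om)$. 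Distinct fibers impose independent restrictions, so
\[
\log_m N_n \;=\; \sum_{(i,S)=1}\log_m A_{K(n,i)} \;=\; \sum_{k\ge 1}T_k(n)\log_m A_k,
\]
where $T_k(n):=|\{i:(i,S)=1,\ n/\ell_{k+1}<i\le n/\ell_k\}|$.

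Next I would obtain the asymptotics of $T_k(n)$. A standard inclusion--exclusion over the squarefree divisors of $p_1\cdots p_J$, together with the identity $\prod_{j=1}^J(1-p_j^{-1})=\gam(S)^{-1}$ from (\ref{eq-fs2}), yields $T_k(n)=\gam(S)^{-1}n(\ell_k^{-1}-\ell_{k+1}^{-1})+O(2^J)$, so that $T_k(n)/n$ converges, for each fixed $k$, to the desired density. To move the limit inside the infinite sum I would invoke dominated convergence with dominating sequence $k/\ell_k$: from $T_k(n)\le n/\ell_k$ and the trivial bound $\log_m A_k\le k$ one gets $T_k(n)\log_m A_k/n\le k/\ell_k$, and $\sum_k k/\ell_k<\infty$ because $|\{\ell\in S:\ell\le x\}|=O((\log x)^J)$ forces $\ell_k$ to grow at least like $\exp(ck^{1/J})$. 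This yields the first displayed formula of the theorem.

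The second formula follows by summation-by-parts. Writing $\log_m A_k=\sum_{j=1}^k\log_m(A_j/A_{j-1})$ (with $A_0:=1$) and swapping the order of summation converts
\[
\sum_{k\ge 1}\log_m A_k\bigl(\ell_k^{-1}-\ell_{k+1}^{-1}\bigr) \;=\; \sum_{j\ge 1}\frac{\log_m(A_j/A_{j-1})}{\ell_j} \;=\; \log_m A_1+\sum_{k\ge 1}\frac{\log_m(A_{k+1}/A_k)}{\ell_{k+1}},
\]
using $\ell_1=1$; the leading $1$ in the theorem corresponds to $\log_m A_1=1$ under the (implicit) assumption that every letter in $\{0,\ldots,m-1\}$ begins some element of $\Om$. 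I expect the main delicate point to be the dominated-convergence step: the $O(2^J)$ inclusion--exclusion error must be controlled uniformly in $k$, and the transitional range where $\ell_k$ is comparable to $n$ (so the density approximation is weak) requires a separate truncation estimate, since $T_k(n)$ is a genuine integer count rather than an average.
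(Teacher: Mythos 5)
Your proposal is correct and follows essentially the same route as the paper: both count the cylinders via the fiber decomposition $\N=\bigsqcup\{iS:(i,S)=1\}$ to obtain $\log_m N_n=\sum_k |B_k^{(n)}|\log_m A_k$ (your $T_k(n)$ is the paper's $|B_k^{(n)}|$) and then pass to the limit, finishing with the same telescoping for the second formula. The only difference is technical and inessential: you handle all $n$ at once via inclusion--exclusion with an $O(2^J)$ error plus dominated convergence (justified by $\sum_k k/\ell_k<\infty$), whereas the paper restricts to a sub-progression of $n$ on which the counts are exact, truncates at a level $r$, and bounds the tail using $A_k\le m^k$ together with $n=\sum_k k|B_k^{(n)}|$ --- both tail estimates resting on the same super-polynomial growth of $\ell_k$.
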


The first formula for the Hausdorff dimension is obtained via
a version of the variational principle. Let $\mu$ be a Borel probability 
measure on $\Om$. Denote by $\alpha_k$ the partition of $\Om$ into cylinders of length $k$, so that $A_k = |\alpha_k|$.
We consider the Shannon entropy of a finite partition, using logarithms based $m$:
$$
H^\mu(\alpha):= -\sum_{B\in \alpha} \mu(B)\log_m\mu(B)
$$
and the conditional entropy $H^\mu(\alpha|\beta)$ for two finite partitions. Define
\begin{eqnarray} 
s(\Om,\mu) & := & \gam(S)^{-1} \sum_{k=1}^\infty H^\mu(\alpha_k)\Bigl(\frac{1}{\ell_k} - \frac{1}{\ell_{k+1}}\Bigr) \nonumber
         \\  &  = & \gam(S)^{-1} \Bigl( H^\mu(\alpha_1) + \sum_{k=1}^\infty \frac{H^\mu(\alpha_{k+1}|\alpha_k)}{\ell_{k+1}}\Bigr).
\label{def-som}
\end{eqnarray} 

\begin{theorem}\label{th-var}
{\bf (i)} We have
$$
\dim_H(X_\Om^{(S)}) = \sup_\mu s(\Om,\mu),
$$
where the supremum is over Borel probability measures on $\Om$.

{\bf (ii)} We have $\dim_H(X_\Om^{(S)}) = \dim_M(X_\Om^{(S)})$ if and only if the tree of prefixes of $\Om$ is spherically
symmetric, i.e.\ for every $k\in \N$, all prefixes of length $k$ have the same (equal) number of continuations in $\Pref_{k+1}(\Om)$.
\end{theorem}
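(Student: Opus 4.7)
The plan is to prove part (i) by separately establishing the matching lower and upper bounds for $\dim_H(X_\Om^{(S)})$, and to deduce part (ii) from the entropy inequality $H^\mu(\alpha_k) \le \log_m A_k$ combined with Theorem \ref{th-mink}.

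For the lower bound in (i), given a Borel probability measure $\mu$ on $\Om$, I would construct a natural measure $\nu_\mu$ on $X_\Om^{(S)}$ by pushing the product measure $\bigotimes_{(i,S)=1}\mu$ through the identification $x\leftrightarrow(x|_{iS})_{(i,S)=1}$ coming from (\ref{eq-disj}). For $x\in X_\Om^{(S)}$, the cylinder $[x_1\cdots x_n]\subset \Sig_m$ is determined by prescribing $x|_{iS}$ up to index $k(i,n):=\max\{k: i\ell_k\le n\}$ for each $i\le n$ with $(i,S)=1$, so
\[
-\log_m\nu_\mu([x_1\cdots x_n])=\sum_{\substack{i\le n\\ (i,S)=1}}-\log_m\mu\bigl([x_{i\ell_1}\cdots x_{i\ell_{k(i,n)}}]\bigr).
\]
The summands are $\nu_\mu$-independent with means $H^\mu(\alpha_{k(i,n)})$ and uniformly bounded variances, so a Borel--Cantelli argument yields, for $\nu_\mu$-a.e.\ $x$,
\[
\lim_{n\to\infty}\frac{-\log_m\nu_\mu([x_1\cdots x_n])}{n}=\lim_{n\to\infty}\frac{1}{n}\sum_{\substack{i\le n\\ (i,S)=1}}H^\mu(\alpha_{k(i,n)}).
\]
The Abel summation carried out in Section 4 for Theorem \ref{th-mink}, applied with $H^\mu(\alpha_k)$ in place of $\log_m A_k$, identifies this limit with $s(\Om,\mu)$. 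Billingsley's lemma then gives $\dim_H(X_\Om^{(S)})\ge s(\Om,\mu)$, and taking the supremum over $\mu$ yields the lower bound.

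For the upper bound in (i), the strategy is dual but more involved. A direct approach constructs efficient covers of $X_\Om^{(S)}$ using product-type cylinders in the bijective representation $X_\Om^{(S)}\cong\Om^{\{i:(i,S)=1\}}$ induced by (\ref{eq-disj}), with variable depth in each factor; the resulting cost is estimated by the same Abel summation used for Theorem \ref{th-mink}, now applied to an entropy-weighted count selected (via Shannon--McMillan) from a near-optimal measure on $\Om$. Equivalently, one may argue via an entropy--dimension inequality for arbitrary measures $\nu$ on $X_\Om^{(S)}$: subadditivity of Shannon entropy under joins gives $H^\nu(\alpha^{(n)})\le \sum_{(i,S)=1,\,i\le n}H^{\nu_i}(\alpha_{k(i,n)})$, where $\alpha^{(n)}$ is the level-$n$ cylinder partition of $X_\Om^{(S)}$ and $\nu_i$ is the pushforward of $\nu$ under $x\mapsto x|_{iS}$, after which concavity reduces the supremum over $\nu$ to one over measures on $\Om$. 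Either way, the Abel summation recovers $\sup_\mu s(\Om,\mu)$. This is the main obstacle: the failure of shift-invariance rules out Furstenberg's theorem and forces one to handle the multiplicative structure simultaneously in all sub-semigroups $iS$.

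For part (ii), note that $H^\mu(\alpha_k)\le \log_m A_k$ with equality iff $\mu$ is uniform on $\alpha_k$. Hence (\ref{def-som}) and the strict positivity of $1/\ell_k-1/\ell_{k+1}$ give $s(\Om,\mu)\le \dim_M(X_\Om^{(S)})$ for every $\mu$, with equality in the supremum forcing a maximizer $\mu_*$ that is simultaneously uniform on every $\alpha_k$ (the maximizer exists by weak-$*$ compactness of probabilities on $\Om$ and upper semicontinuity of each $H^\mu(\alpha_k)$). Simultaneous uniformity means $\mu_*([u])=1/A_k$ for every $u\in \Pref_k(\Om)$; summing $\mu_*([ui])=1/A_{k+1}$ over the children of a fixed $u\in\Pref_k(\Om)$ yields that the number of children equals $A_{k+1}/A_k$, independent of $u$, i.e.\ the tree $\Gam(\Om)$ is spherically symmetric. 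Conversely, on such a tree the measure assigning mass $1/A_k$ to every $\alpha_k$-cylinder is consistently defined on $\Om$ and realizes $s(\Om,\mu_*)=\dim_M(X_\Om^{(S)})$; combining this with part (i) and Theorem \ref{th-mink} yields $\dim_H(X_\Om^{(S)})=\dim_M(X_\Om^{(S)})$.
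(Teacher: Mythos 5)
Your lower bound in (i) and your argument for (ii) are essentially the paper's: the product measure $\Pmu$ of (\ref{eq-proba}), a law of large numbers applied level by level, and Billingsley's lemma give $\dim_H(X_\Om^{(S)})\ge s(\Om,\mu)$, and (ii) follows from $H^\mu(\alpha_k)\le\log_m A_k$ with equality iff $\mu$ is uniform on $\alpha_k$. (One inaccuracy there: the variances of $-\log_m\mu[x_1^n|_{iS}]$ are \emph{not} uniformly bounded over all levels $k$; the paper avoids this by truncating at a fixed level $r$, proving only the one-sided bound (\ref{eq-lb2}) for each $r$, and letting $r\to\infty$. The claimed equality of the limit is neither needed nor justified as stated.)

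The genuine gap is the upper bound in (i), which is the heart of the theorem, and neither of your sketches closes it. For the entropy-subadditivity route: subadditivity and concavity give $\frac1n H^\nu(\alpha^{(n)})\lesssim \gam(S)^{-1}\bigl(H^{\bar\nu_0}(\alpha_1)+\sum_k \ell_{k+1}^{-1}H^{\bar\nu_k}(\alpha_{k+1}|\alpha_k)\bigr)$, but the averaged measures $\bar\nu_k=\beta_{n/\ell_{k+1}}^{-1}\sum_{i\le n/\ell_{k+1},\,(i,S)=1}\nu_i$ are averages over nested, $k$-dependent index sets and need not be marginals of a single measure on $\Om$. Decoupling the levels in this way inflates the bound to $\gam(S)^{-1}\bigl(1+\sum_k\ell_{k+1}^{-1}\sup_\mu H^\mu(\alpha_{k+1}|\alpha_k)\bigr)$, and since $\sup_\mu H^\mu(\alpha_{k+1}|\alpha_k)=\log_m\max_u\#\{j: uj\in\Pref_{k+1}(\Om)\}\ge\log_m(A_{k+1}/A_k)$, this is at least $\dim_M(X_\Om^{(S)})$ --- useless, given that Corollary \ref{cor-dim} asserts $\dim_H<\dim_M$ in the cases of interest. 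Moreover, bounding $H^\nu(\alpha^{(n)})/n$ only controls the $\nu$-average of the lower local dimension, whereas an upper bound on $\dim_H$ of the \emph{set} requires controlling $\liminf_n(-\frac1n\log_m\nu[x_1^n])$ at every point of $X_\Om^{(S)}$ (Lemma \ref{lem-Billing}(ii)); the Shannon--McMillan/covering variant founders on the same point, since typical cylinders of a near-optimal $\mu$ do not cover the atypical part of the set.

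For contrast, the paper's route is: solve the nonlinear fixed-point system of Lemma \ref{lem-elem}, use the solution $\ov t$ to define the Markov-type measure $\mu$ on $\Om$ by (\ref{def-meas1})--(\ref{def-meas2}), and show that the associated $\Pmu$ satisfies $\liminf_n(-\frac1n\log_m\Pmu[x_1^n])\le\log_m t(\varnothing)$ for \emph{every} $x\in X_\Om^{(S)}$. This rests on the exact telescoping identity (\ref{eq-pmu1}) (a consequence of the fixed-point equations) combined with a Ces\`aro average over the scales $n\ell_1,\dots,n\ell_M$ and the divisibility Lemma \ref{lem-divide}, which force cancellation of all but an $\eps$-fraction of the terms uniformly in $x$. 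Lemma \ref{lem-comsom} then identifies $\log_m t(\varnothing)$ with $s(\Om,\mu)$ for this particular $\mu$, closing the variational principle. Your proposal contains none of these ingredients, so the upper bound remains unproved.
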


\begin{corollary} \label{cor-dim}
For every set of distinct $n_1,\ldots,n_r\in \N$, with $r\ge 2$, we have $\dim_H(X^{(m)}_{n_1,\ldots,n_r}) < 
\dim_M (X^{(m)}_{n_1,\ldots,n_r})$ where the set $X^{(m)}_{n_1,\ldots,n_r}$ is defined
by (\ref{def-xgen}). 
\end{corollary}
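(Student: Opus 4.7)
The plan is to apply Theorem \ref{th-var}(ii): the equality $\dim_H(X^{(m)}_{n_1,\ldots,n_r})=\dim_M(X^{(m)}_{n_1,\ldots,n_r})$ holds precisely when the tree of prefixes of the associated set $\Om$ is spherically symmetric, so it suffices to exhibit two prefixes of the same length in $\Om$'s prefix tree having different numbers of continuations.

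First I will extract one explicit constraint of $\Om$. Since $X^{(m)}_{n_1,\ldots,n_r}=X_\Om^{(S)}$ with $S$ generated by the prime factors of $n_1,\ldots,n_r$, each $n_l$ belongs to $S$, so there is an index $a_l$ with $\ell_{a_l}=n_l$; the defining relation $x_{n_1}\cdots x_{n_r}=0$ at $k=1$ translates, after restricting to the $S$-slice $x|_S$, into the constraint $\om_{a_1}\cdots\om_{a_r}=0$ imposed on $\Om$. The $a_l$ are distinct because the $n_l$ are distinct; reindexing, let $p_1<p_2<\cdots<p_r$ denote the same positions in increasing order, and note $p_r\ge 2$ since $r\ge 2$.

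Next I will compare two prefixes of length $k:=p_r-1$. Let $u\in\{0,\ldots,m-1\}^k$ be the all-zero word, and let $v\in\{0,\ldots,m-1\}^k$ be the word carrying a $1$ at each of the positions $p_1,\ldots,p_{r-1}$ and a $0$ at every other position. Both belong to $\Pref_k(\Om)$: $u$ is a prefix of the all-zero sequence, which trivially lies in $\Om$; and $v$ is a prefix of the sequence obtained from $v$ by setting $\om_{p_r}=0$ and every subsequent coordinate to $0$, a sequence with only $r-1$ nonzero coordinates that therefore violates none of $\Om$'s constraints, since each such constraint involves $r$ distinct positions (the distinctness comes from the distinctness of $n_1 t,\ldots,n_r t$ for any common $t\in S$).

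Finally I count continuations. For $u$, every digit $c\in\{0,\ldots,m-1\}$ is admissible since $uc$ extends to $\Om$ by padding with zeros (with at most one nonzero coordinate in the extension), so $u$ has $m$ continuations. For $v$, a continuation $vc$ with $c\ne 0$ would make $\om_{p_1},\ldots,\om_{p_r}$ simultaneously nonzero in every completion, violating $\om_{p_1}\cdots\om_{p_r}=0$; hence $[vc]\cap\Om=\es$ and only the continuation $v0$ survives. Since $m\ge 2$, the numbers of continuations ($m$ versus $1$) disagree, the prefix tree of $\Om$ is not spherically symmetric, and Theorem \ref{th-var}(ii) yields the desired strict inequality. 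I do not foresee a genuine obstacle; the only points requiring care are verifying that $u,v\in\Pref_k(\Om)$ and that the continuation count for $v$ is exactly $1$, both of which reduce to the observation that a constraint of $\Om$ can be violated only when $r$ coordinates are simultaneously nonzero.
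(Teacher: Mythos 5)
Your proof is correct and follows essentially the same route as the paper: both arguments locate the level $k=j-1$, where $\ell_j=\max_l n_l$, exhibit one length-$k$ prefix with $m$ continuations and one (nonzero exactly at the positions of $n_1,\ldots,n_{r-1}$) with a single continuation, and conclude via Theorem~\ref{th-var}(ii). Your version is if anything slightly more careful than the paper's, since you explicitly verify membership of both words in $\Pref_k(\Om)$ via the observation that every constraint of $\Om$ involves $r$ distinct positions; only note that your symbols $p_1,\ldots,p_r$ clash with the paper's notation for the generating primes of $S$.
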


\begin{proof}[Proof (assuming Theorem~\ref{th-var})]
As explained above, $X^{(m)}_{n_1,\ldots,n_r}=X_\Om^{(S)}\subset 
\Sig_m$, where $S$ is the semigroup generated by all prime factors of $n_1,\ldots,
n_r$. Suppose that $n_1<\cdots <n_r$, and let $j\ge 2$ be such that $\ell_j=n_r$. Then every word with symbols
$0,\ldots,m-1$
of length $(j-1)$ is an allowed prefix of $\Om$. Observe that some words of length $(j-1)$
have only one continuation (by 0) to a word of length $j$, namely, those which have nonzeros 
in the positions corresponding to $n_i\in S$, for $i=1,\ldots, r-1$. However,
all other words of length $(j-1)$ (and there will always be some) have $m$ continuations to a word of length $j$.
This shows that the tree $\Pref(\Om)$ is not spherically symmetric and
hence the Hausdorff dimension is strictly less than the Minkowski dimension by Theorem~\ref{th-var}.
\end{proof}

Another way to express the Hausdorff dimension of $X_\Om^{(S)}$ is via a nonlinear system of equations.

\begin{lemma} \label{lem-elem} Let $\Om$ be a closed subset of $\Sig_m$. Then there exists a vector 
$$
\ov{t}=(t(u))_{u\in \Pref(\Om)}\in [1,+\infty)^{\Pref(\Om)},
$$
such that
\be \label{eq-vec1}
t(\varnothing) \in [1,m],\ \ t(u) \in [1, m^{\ell_k(\ell_{k+1}^{-1} +\ell_{k+2}^{-1}+\cdots)}],\ |u|=k,\ k\ge 1,
\ee
\be
\label{eq-elem0}
t(\varnothing)^{\gam(S)} = \sum_{j\in \Pref_1(\Om)} t(j),
\ee
and
\be \label{eq-elem}
t(u)^{\ell_{k+1}/\ell_{k}} = \sum_{j:\ uj\in \Pref_{k+1}(\Om)} t(uj),\ \ \ \forall\ u \in \Pref_k(\Om),\ k\ge 1,
\ee
\end{lemma}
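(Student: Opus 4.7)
The plan is to construct $\ov{t}$ as a pointwise monotone limit of finite approximations obtained by truncating the prefix tree at depth $N$ and letting $N \to \infty$. For each integer $N \ge 1$, I first define $t_N(u)$ for every $u \in \Pref(\Om)$ with $|u| \le N$ as follows. Set $t_N(u) := 1$ whenever $|u|=N$, and then, by downward induction on $k$, set
\[
t_N(u) := \Bigl( \sum_{j:\, uj \in \Pref_{k+1}(\Om)} t_N(uj) \Bigr)^{\ell_k/\ell_{k+1}} \quad \text{for } u \in \Pref_k(\Om),\ 1 \le k < N,
\]
and finally $t_N(\varnothing) := \bigl(\sum_{j \in \Pref_1(\Om)} t_N(j)\bigr)^{1/\gam(S)}$. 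Since every vertex of $\Pref(\Om)$ has at least one child, an easy induction shows $t_N(u) \ge 1$ throughout.

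Next I verify by downward induction on $k$ that $t_N(u) \le m^{\ell_k(\ell_{k+1}^{-1} + \cdots + \ell_N^{-1})}$ for $|u|=k$ with $1 \le k \le N$, the base case $k=N$ being trivial. Using that $u$ has at most $m$ children, the inductive step gives
\[
t_N(u)^{\ell_{k+1}/\ell_k} \le m \cdot m^{\ell_{k+1}(\ell_{k+2}^{-1} + \cdots + \ell_N^{-1})},
\]
and raising to the power $\ell_k/\ell_{k+1}$ yields the required bound. For the root, combining the identity $\ell_1=1$ with $\gam(S) = \sum_{k\ge 1}\ell_k^{-1}$ gives $t_N(\varnothing) \le m^{(\sum_{k\le N}\ell_k^{-1})/\gam(S)} \le m$.

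The third step is to establish monotonicity $t_N(u) \le t_{N+1}(u)$ for all $u$ with $|u| \le N$. At depth $N$ one has $t_N(u) = 1 \le t_{N+1}(u)$; since the defining equations are monotone in their inputs, the inequality propagates up the tree. Consequently $t(u) := \lim_{N \to \infty} t_N(u)$ exists and lies in the interval claimed in (\ref{eq-vec1})---finiteness of the upper bound being guaranteed by $\gam(S) < \infty$, cf.\ (\ref{eq-fs2}). Passing to the limit in the defining recursions of $t_N$ yields (\ref{eq-elem0})--(\ref{eq-elem}) by continuity of the relevant power and sum operations.

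I do not anticipate a serious obstacle, as the argument is a standard monotone iteration controlled by an a priori bound. The only minor subtlety is treating the root separately, because its exponent $\gam(S)$ replaces the ratio $\ell_{k+1}/\ell_k$ appearing at interior vertices; however, $\ell_1 = 1$ makes the telescoping in the upper-bound estimate work out so that the root fits seamlessly into the same inductive scheme.
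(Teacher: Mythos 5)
Your proof is correct. It does not follow the paper's primary argument, which places all the candidate vectors in the compact product space $\Xi= [1,m]\times \prod_{k\ge 1}[1, m^{\ell_k(\ell_{k+1}^{-1}+\cdots)}]^{\Pref_k(\Om)}$, checks that the natural operator $\Phi$ maps $\Xi$ into itself, and invokes the Tychonov fixed point theorem. Instead, you carry out in detail what the paper only sketches in a parenthetical remark: a monotone iteration started from the all-ones vector. Indeed, your truncated quantity $t_N(u)$ for $|u|=k$ coincides with the $(N-k)$-th iterate $\Phi^{N-k}(\mathbf{1})(u)$, so your scheme is the same iteration reindexed by truncation depth. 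What your write-up adds over the paper's one-line alternative is the explicit a priori bound $t_N(u)\le m^{\ell_k(\ell_{k+1}^{-1}+\cdots+\ell_N^{-1})}$ (which is exactly what makes the monotone limit finite and lands it in the interval required by (\ref{eq-vec1})), the verification that monotonicity propagates from the leaves to the root, and the observation that the recursions involve only finitely many terms so one may pass to the limit. This makes the argument fully elementary and constructive, avoiding any infinite-dimensional fixed point theorem; the paper's Tychonov route is shorter on the page but relies on a heavier tool. Both yield the same conclusion, and your handling of the root (using $\ell_1=1$ and $\gam(S)=\sum_k \ell_k^{-1}$) matches the computation the paper performs when checking $\Phi(\Xi)\subset\Xi$.
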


Using $\ov{t}$, it is easy to express the Hausdorff dimension.

\begin{theorem}\label{th-hausd}
We have
$$
\dim_H(X_\Om^{(S)}) = \log_m t(\varnothing),
$$
where $t(\varnothing)$ is from Lemma~\ref{lem-elem}.
\end{theorem}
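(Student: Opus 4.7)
The plan is to invoke Theorem~\ref{th-var}(i) and identify the optimizing measure explicitly in terms of the vector $\ov{t}$ furnished by Lemma~\ref{lem-elem}. Concretely, I will construct a Markov-type measure $\mu^*$ on $\Om$ using the $t(u)$'s, verify by a telescoping identity that $s(\Om,\mu^*)=\log_m t(\varnothing)$, and then establish the matching inequality $s(\Om,\mu)\le\log_m t(\varnothing)$ for arbitrary $\mu$ via the Gibbs (relative-entropy) inequality.

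To define $\mu^*$, observe that the relations (\ref{eq-elem0}) and (\ref{eq-elem}) turn the expressions
$$ p^*(j):=\frac{t(j)}{t(\varnothing)^{\gam(S)}},\ j\in\Pref_1(\Om),\qquad p^*(j\mid u):=\frac{t(uj)}{t(u)^{\ell_{k+1}/\ell_k}},\ uj\in\Pref_{k+1}(\Om), $$
into bona fide probability distributions, and hence define a Borel probability measure $\mu^*$ on $\Om$. With the shorthand $T_k^\mu:=\sum_{|u|=k}\mu([u])\log_m t(u)$ and $T_0:=\log_m t(\varnothing)$, the definition of $\mu^*$ and the equations of Lemma~\ref{lem-elem} yield
$$ H^{\mu^*}(\alpha_1)=\gam(S)\,T_0-T_1^{\mu^*},\qquad H^{\mu^*}(\alpha_{k+1}\mid\alpha_k)=\frac{\ell_{k+1}}{\ell_k}\,T_k^{\mu^*}-T_{k+1}^{\mu^*}\quad(k\ge 1). $$
Plugging these into (\ref{def-som}) and dividing the $k$-th summand by $\ell_{k+1}$, the partial sums collapse to $\gam(S)\,T_0-T_{N+1}^{\mu^*}/\ell_{N+1}$; the upper bound in (\ref{eq-vec1}) forces $T_{N+1}^{\mu^*}/\ell_{N+1}\le\sum_{j>N+1}\ell_j^{-1}\to 0$, so $s(\Om,\mu^*)=\log_m t(\varnothing)$.

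For the matching upper bound, let $\mu$ be any Borel probability measure on $\Om$. The Gibbs inequality, applied conditionally to the distribution of the $(k+1)$-st coordinate under $\mu$ versus $p^*(\cdot\mid u)$, gives
$$ H^\mu(\alpha_{k+1}\mid\alpha_k)\le\frac{\ell_{k+1}}{\ell_k}\,T_k^\mu-T_{k+1}^\mu, $$
and analogously $H^\mu(\alpha_1)\le\gam(S)\,T_0-T_1^\mu$. The same telescoping then yields
$$ \gam(S)\,s(\Om,\mu)\le\gam(S)\,T_0-\liminf_{N\to\infty}\frac{T_{N+1}^\mu}{\ell_{N+1}}\le\gam(S)\,T_0, $$
where the last step uses $T_k^\mu\ge 0$, valid since $t(u)\ge 1$ for every $u\in\Pref(\Om)$. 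Combined with Theorem~\ref{th-var}(i), this proves the claim.

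The main obstacle I expect is the rigorous handling of the infinite series: the two-sided bounds (\ref{eq-vec1}) on $\ov{t}$ are tailored precisely so that $\log_m t(u)$ grows slowly enough in $k$ to make the remainder $T_{N+1}/\ell_{N+1}$ vanish. Once $\mu^*$ has been written down and Gibbs's inequality is identified as the correct convexity tool, everything else is bookkeeping.
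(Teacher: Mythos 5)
Your lower bound is correct and coincides with the paper's: the measure $\mu^*$ you build from $\ov{t}$ is exactly the measure of (\ref{def-meas1})--(\ref{def-meas2}), and your telescoping computation of $s(\Om,\mu^*)=\log_m t(\varnothing)$ is precisely Lemma~\ref{lem-comsom}. Your Gibbs-inequality argument that $s(\Om,\mu)\le \log_m t(\varnothing)$ for \emph{every} $\mu$ is also mathematically correct, and is a nice observation which the paper only obtains indirectly (by sandwiching $\sup_\mu s$ between $s(\Om,\mu^*)$ and $\dim_H$).

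The gap is in how you convert $\sup_\mu s(\Om,\mu)\le\log_m t(\varnothing)$ into an upper bound on $\dim_H(X_\Om^{(S)})$. You invoke the equality in Theorem~\ref{th-var}(i), but the inequality $\dim_H(X_\Om^{(S)})\le\sup_\mu s(\Om,\mu)$ is never proved independently in the paper: Section 5 establishes only the lower bound $\dim_H\ge s(\Om,\mu)$ for each $\mu$, while the upper-bound half of Theorem~\ref{th-var}(i) is itself deduced from the direct proof that $\dim_H(X_\Om^{(S)})\le\log_m t(\varnothing)$ --- that is, from the very statement you are trying to prove. So your argument is circular, and it omits the genuinely hard part. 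What is needed is to extend $\mu^*$ to the product-type measure $\Pmu$ on $X_\Om^{(S)}$ via (\ref{eq-proba}), derive the exact formula (\ref{eq-pmu1}) for $\Pmu[x_1^n]$, and show that $\liminf_{n\to\infty}\bigl(-\tfrac1n\bigr)\log_m\Pmu[x_1^n]\le\log_m t(\varnothing)$ for \emph{every} $x\in X_\Om^{(S)}$, since Billingsley's Lemma~\ref{lem-Billing}(i) requires the pointwise bound on the whole set, not merely $\Pmu$-a.e. The paper achieves this by averaging $-\log_m\Pmu[x_1^{n\ell_j}]/(n\ell_j)$ over $j\le M$ and exploiting the cancellation of most terms $\Av_k\bigl(x,\tfrac{n\ell_j}{\ell_{k+1}}\bigr)-\Av_k\bigl(x,\tfrac{n\ell_{j'}}{\ell_k}\bigr)$ when $\ell_{k+1}\mid\ell_j$ (Lemmas~\ref{lem-liminf} and \ref{lem-divide}). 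Nothing in your proposal substitutes for this step; for these non-shift-invariant sets there is no soft variational argument that bounds the Hausdorff dimension from above.
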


\section{Preliminaries}

%
 
Let us start with more notations.
Denote
\be \label{def-beta}
\beta_n:= |\{i\le n:\ (i,S)=1\}|.
\ee
We need the following standard fact.

\begin{lemma} \label{lem-betan}
If $p_1,\ldots,p_J$ divide $n$, then
$$
\beta_n = \gam(S)^{-1}n=n\prod_{j=1}^J \Bigl( 1 - \frac{1}{p_j}\Bigr)\,.
$$
\end{lemma}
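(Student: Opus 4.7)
The plan is to combine elementary inclusion–exclusion with the product formula for $\gam(S)$ already established in (\ref{eq-fs2}). First I would introduce, for each subset $T\subseteq \{1,\ldots,J\}$, the set
$$M_T=\{i\le n:\ p_j\mid i\text{ for every }j\in T\}.$$
The condition $(i,S)=1$ is precisely the complement of $M_{\{1\}}\cup\cdots\cup M_{\{J\}}$, so by the standard inclusion–exclusion formula
$$\beta_n=\sum_{T\subseteq\{1,\ldots,J\}}(-1)^{|T|}|M_T|.$$

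The next step is where the hypothesis $p_1\cdots p_J\mid n$ gets used. Since each $p_j$ divides $n$, the product $\prod_{j\in T}p_j$ also divides $n$ (the $p_j$ are distinct primes), so the count $|M_T|=n/\prod_{j\in T}p_j$ is exact, with no floor correction. Substituting this into the inclusion–exclusion sum and factoring,
$$\beta_n=n\sum_{T\subseteq\{1,\ldots,J\}}\prod_{j\in T}\frac{-1}{p_j}=n\prod_{j=1}^{J}\Bigl(1-\frac{1}{p_j}\Bigr).$$

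Finally I would invoke (\ref{eq-fs2}), which gives $\gam(S)=\prod_{j=1}^J(1-1/p_j)^{-1}$, so the product $\prod_j(1-1/p_j)$ is exactly $\gam(S)^{-1}$, completing the identification $\beta_n=\gam(S)^{-1}n$. There is no real obstacle: the only subtlety is checking that the divisibility hypothesis removes all floor functions from the inclusion–exclusion count, which is immediate because distinct primes dividing $n$ have their product dividing $n$ as well.
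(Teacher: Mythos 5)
Your proof is correct. It differs from the paper's in presentation rather than substance: the paper first observes that when $n=\prod_{j=1}^J p_j$ the condition $(i,S)=1$ is the same as $(i,n)=1$, so $\beta_n=\phi(n)$ and the result follows from the standard formula for Euler's totient; it then handles general $n$ divisible by all the $p_j$ by noting that the condition $(i,S)=1$ is periodic with period $\prod_{j=1}^J p_j$. You instead run the inclusion--exclusion count directly on $\{1,\ldots,n\}$, using the hypothesis to guarantee that each $|M_T|=n/\prod_{j\in T}p_j$ is exact, which in one stroke proves the totient formula and dispenses with the periodicity step. Your version is self-contained (it does not cite the formula for $\phi$ as known) at the cost of writing out the inclusion--exclusion sum; the paper's version is shorter because it outsources that computation to a classical fact. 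Both correctly locate the only point where the divisibility hypothesis is needed, namely the absence of floor corrections in the counts.
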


\begin{proof}
If $n=\prod_{j=1}^J p_j$, then $(i,S)=1$ if and only if $(i,n)=1$, hence $\beta_n=\phi(n)$, Euler's $\phi$-function, for which the formula is
well-known. In the general case, it remains to note that $(i,S)=1$ if and only if $(i+\prod_{j=1}^J p_j,S)=1$.
\end{proof}

Recall that $S = \{\ell_k\}_{k=1}^\infty$, with $1=\ell_1< \ell_2<\ldots$  We will  denote
\be \label{def-bk}
B^{(n)}_k := \{i\in (n/\ell_{k+1}, n/\ell_k]\cap \N:\ (i,S)=1\},
\ee
where $n/\ell_{k+1}$ and $n/\ell_k$ are not necessarily integers.  For every   $n$, let  $K(n)$ be the unique integer such that  
\be
\label{eq1}
\ell_{K(n)}\le n < \ell_{K(n)+1}.
\ee
 Obviously, one has
\be
\label{eqbkn}
\beta_n = \sum_{k= 1}^{K(n)} |B^{(n)}_k| \ \ \mbox{ and } \ \ n = \sum_{k=1}^{K(n)} k |B^{(n)}_k|.
\ee
For a finite word $u$,  we write
$$
u|_{iS}: = u_{i\ell_1} \ldots u_{i\ell_r},\ \ \mbox{where}\ \  i\ell_r \le |u| < i\ell_{r+1},
$$
and for $x=(x_k)_{k\geq 1}\in \Sig_m$ we denote
$$
x_1^n:= x_1\ldots x_n.
$$

We now prove Lemma ~\ref{lem-elem}.

\begin{proof}[Proof of Lemma~\ref{lem-elem}]
Consider the following compact set:
$$
\Xi:= [1,m]\times \prod_{k=1}^\infty [1, m^{\ell_{k}(\ell_{k+1}^{-1} + \ell_{k+2}^{-1} + \cdots)}]^{\Pref_k(\Om)}.
$$
Define a function $\Phi:\, \Xi\to [1,\infty)^{\Pref(\Om)}$ by
$$
\Phi(\ov{t})(u) = {\Bigl(\sum_{uj\in \Pref_{k+1}(\Om)} t(uj)\Bigr)}^{\ell_{k}/\ell_{k+1}},\ \ u\in \Pref_k(\Om),\ k\ge 1,
$$
$$ 
\Phi(\ov{t})(\varnothing) =  {\Bigl(\sum_{j=0}^{m-1} t(j)\Bigr)}^{1/\gam(S)}.
$$
We claim that $\Phi(\Xi)\subset \Xi$. Indeed, if $t(u)\ge 1$ for all $u$, then clearly $\Phi(\ov{t})(u)\ge 1$. For the other inequality, we have,
assuming that $\ov{t}\in \Xi$ and $u\in \Pref_k(\Om),\ k\ge 1$:
$$
\Phi(\ov{t})(u)  \le  m^{(1 + \ell_{k+1}(\ell_{k+2}^{-1}+\ell_{k+3}^{-1} + \cdots))\ell_{k}/\ell_{k+1}}
             =     m^{\ell_{k}(\ell_{k+1}^{-1} + \ell_{k+2}^{-1} + \cdots)},
$$
as desired.
Finally,
$$
\Phi(\ov{t})(\varnothing) \le  m^{(1 + \ell_1(\ell_2^{-1} + \ell_3^{-1} + \cdots))\gam(S)^{-1}}=m^{\ell_1}=m,
$$
by the definition of $\gam(S)$ in (\ref{eq-fs}), and the claim is verified.

Since $\Phi$ is continuous, it has a fixed point by the Tychonov fixed point theorem, which is the desired solution.
(Alternatively, we can start with the vector
of all 1's and iterate $\Phi$. The operator $\Phi$ is monotone in each coordinate, hence there is a coordinate-wise limit, which will be a
fixed point for $\Phi$.)
\end{proof}

Hausdorff dimension will be computed with the help of the following lemma, essentially
due to Billingsley, which we state in the 
symbolic space.

\begin{lemma}[see Proposition 2.2 in \cite{Falc}] \label{lem-Billing}
Let $E$ be a Borel set in $\Sig_m$ and let $\nu$ be a finite Borel measure on $\Sig_m$.

{\bf (i)} If
$\liminf_{n\to \infty} (-\frac{1}{n}) \log_2 \nu[x_1^n] \ge s\ \ \mbox{for $\nu$-a.e.}\ x\in E,$
then $\dim_H(E) \ge s$.

{\bf (ii)} If
$\liminf_{n\to \infty} (-\frac{1}{n}) \log_2 \nu[x_1^n] \le s\ \ \mbox{for all}\ x\in E,$
then $\dim_H(E) \le s$.
\end{lemma}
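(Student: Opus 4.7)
The plan is to exploit the tree structure of cylinders in $\Sig_m$: a cylinder $[x_1^n]$ has $\varrho$-diameter $m^{-n}$, and any set of $\varrho$-diameter $\le m^{-n}$ is contained in a single cylinder of length $n$. Consequently, when estimating $\Hk^t$ on subsets of $\Sig_m$, one may restrict to covers by cylinders with no loss. The base of the logarithm only rescales $s$ by a constant, so I work throughout with $\log_m$; the argument is identical.

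For part (i), I would invoke the mass distribution principle (assuming, as is implicit, that $\nu(E)>0$). Fix $\eps>0$ and consider
\[
E_N := \bigl\{x\in E:\ \nu[x_1^n]\le m^{-n(s-\eps)}\ \mbox{for all}\ n\ge N\bigr\}.
\]
The sets $E_N$ increase in $N$, and the hypothesis forces their union to have full $\nu$-measure in $E$, so $\nu(E_{N_0})>0$ for some $N_0$. For any cover of $E_{N_0}$ by cylinders $\{[u_i]\}$ with $|u_i|\ge N_0$,
\[
\sum_i (\diam[u_i])^{s-\eps} = \sum_i m^{-|u_i|(s-\eps)} \ge \sum_i \nu([u_i]\cap E_{N_0}) \ge \nu(E_{N_0})>0.
\]
Since arbitrary covers by sets of diameter $\le m^{-N_0}$ can be replaced by cylinder covers, $\Hk^{s-\eps}(E_{N_0})\ge \nu(E_{N_0})>0$, hence $\dim_H(E)\ge s-\eps$; let $\eps\to 0$.

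For part (ii), fix $\eps,\delta>0$. For each $x\in E$ the hypothesis yields infinitely many $n$ with $\nu[x_1^n]\ge m^{-n(s+\eps)}$; pick the smallest such $n(x)\ge \log_m(1/\delta)$. Any two cylinders in $\Sig_m$ are either disjoint or nested, so from the family $\{[x_1^{n(x)}]:x\in E\}$ one extracts the maximal members, giving a countable pairwise disjoint cover $\{[u_i]\}$ of $E$ with $\diam[u_i]\le \delta$. By disjointness and the defining inequality,
\[
\sum_i (\diam[u_i])^{s+\eps} = \sum_i m^{-|u_i|(s+\eps)} \le \sum_i \nu[u_i] \le \nu(\Sig_m) <\infty.
\]
Hence $\Hk^{s+\eps}_\delta(E)\le \nu(\Sig_m)$ for every $\delta>0$, so $\Hk^{s+\eps}(E)<\infty$ and $\dim_H(E)\le s+\eps$; let $\eps\to 0$.

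The main obstacle is the covering step in part (ii): one must simultaneously (a) cover $E$, (b) make the selected cylinders disjoint (so that the sum of $\nu$-masses is bounded by $\nu(\Sig_m)$), and (c) force small diameters. In $\R^n$ this requires a Vitali-type argument and is where the real work lies; in $\Sig_m$ the nested/disjoint dichotomy of cylinders trivializes it, which is why the proof reduces to the clean bookkeeping sketched above.
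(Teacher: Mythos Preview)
Your proof is correct and is the standard argument for Billingsley's lemma in an ultrametric symbolic space. Note that the paper does not actually prove this lemma: it is stated with a reference to Proposition~2.2 in Falconer's book \cite{Falc}, so there is no ``paper's own proof'' to compare with. Your write-up is essentially a specialization of that standard proof to $\Sig_m$, exploiting (as you correctly emphasize) the nested/disjoint dichotomy of cylinders to bypass the Vitali-type covering argument needed in~$\R^n$.

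Two minor remarks. First, you are right that the $\log_2$ in the statement should be $\log_m$: this is visibly a typo in the paper, since the lemma is applied in Sections~5 and~6 to quantities of the form $\frac{-\log_m \Pmu[x_1^n]}{n}$. Your decision to work with $\log_m$ is the correct reading; just be aware that ``rescales $s$ by a constant'' is not quite the right description---with the metric $\varrho$ as given, the statement with $\log_2$ would literally yield $\dim_H(E)\ge s\log_m 2$, not $\dim_H(E)\ge s$. Second, with the paper's metric the diameter of a length-$n$ cylinder is $m^{-(n+1)}$ rather than $m^{-n}$, but this off-by-one is irrelevant for dimension and your argument goes through unchanged.
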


\section{Minkowski dimension of $X^{(S)}_\Om$}

\begin{proof}[Proof of Theorem~\ref{th-mink}.]
We now compute the Minkowski dimension of $X^{(S)}_\Om$. Recall that
$$\dim_M X^{(S)}_\Om = \lim_{n\to +\infty} \frac {\log_m N_n(X_\Om^{(S)})}{n},$$
where $N_n(X_\Om^{(S)})$ is the number of words of length $n$ that are
prefixes of some $x\in X_\Om^{(S)}$. This holds if the limit exists, one also defines the upper (resp.  lower) dimension $\ov{\dim}_M$ and $\und{\dim}_M$ by taking the liminf (resp.  limsup) instead of the limit.

\smallskip


We need to estimate $N_n(X_\Om^{(S)})$. Fix an integer $r\geq 1$.  Considering integers $n$ of the form  $n=d(\prod_{j=1}^{r+1} \ell_j)( \prod_{i=1}^J p_i)$, for some $d\in \N$, 
 is enough for the
purpose of Minkowski dimension estimates.

By the definition (\ref{def-Xom}), we have $x\in X_\Om^{(S)}$ if and only if $x|_{iS} \in \Om$ for all $i,\ (i,S)=1$. It follows that $x_1^n$ is
a beginning (prefix) of some $x\in X_\Om^{(S)}$ if and only if 
$$
x_1^n|_{iS}\in \Pref_k(\Om),\ \ \forall\,i\in B_k^{(n)},\ k=1,\ldots,K(n),
$$
where $B_k^{(n)}$ is defined in (\ref{def-bk}).
Thus, using the definition \eqref{defak} of $A_k$, we obtain
$$
N_n(X_\Om^{(S)})= \prod_{k=1}^{K(n)} A_k^{|B^{(n)}_k|}\,.
$$

By  the choice of $n$,   $n/\ell_k$ and $n/\ell_{k+1}$ are integers  for every $k\leq r$. By Lemma~\ref{lem-betan}, one sees that 
\be \label{eq-bk}
|B^{(n)}_k|=  \beta_{\frac{n}{l\ell_k}}- \beta_{\frac{n}{\ell_{k+1}}}=\gam(S)^{-1} n \Bigl(\frac{1}{\ell_k} - \frac{1}{\ell_{k+1}}\Bigr)\ \ \mbox{for}\ k\le r,
\ee
hence, 
$$
\frac{1}{n}\log_m N_n(X_\Om^{(S)})\ge \sum_{k=1}^r  \frac{|B^{(n)}_k|}{n} \log_m A_k=
\gam(S)^{-1} \sum_{k=1}^r \log_m(A_k)\Bigl(\frac{1}{\ell_k} - \frac{1}{\ell_{k+1}}\Bigr).
$$
We obtain that 
\be \label{mink-lo}
\underline{\dim}_M(X_\Om^{(S)}) \ge\gamma(S)^{-1} \sum_{k=1}^r \log_m(A_k)\Bigl(\frac{1}{\ell_k} - \frac{1}{\ell_{k+1}}\Bigr).
\ee
On the other hand,  for $r+1\leq k \leq K(n)$,  $A_k$ is bounded from above by  $m^k$. This yields
\be \label{eq-bik}
N_n(X_\Om^{(S)})\le \prod_{k=1}^r  A_k^{|B^{(n)}_k|}\cdot \prod_{k=r+1}^{K(n)}m^{  k |B_k^{(n)}|} = \prod_{k=1}^r  A_k^{|B^{(n)}_k|}\cdot m^{n - \sum_{k=1}^r k |B_k^{(n)}|} \,,
\ee
where \eqref{eqbkn} has been used.
We have
$$
\sum_{k=1}^r k |B_k^{(n)}|=\gamma(S)^{-1}n \sum_{k=1}^r\Bigl(\frac{k}{\ell_k} - \frac{k}{\ell_{k+1}}\Bigr)
= \gamma(S)^{-1}n \Bigl( - \frac{r}{\ell_{r+1}}+\sum_{k=1}^r \frac{1}{\ell_k}\Bigr).
$$
Thus,
$$
n - \sum_{k=1}^r k |B_k^{(n)}| = \gamma(S)^{-1}n\Bigl(\frac{r}{\ell_{r+1}}+\sum_{i=r+1}^\infty \frac{1}{\ell_i}\Bigr).
$$
It follows from (\ref{eq-bik}), again using (\ref{eq-bk}), that
$$
\ov{\dim}_M(X_\Om^{(S)}) \le \gamma(S)^{-1} \left ( \, \sum_{k=1}^r \log_m(A_k)\Bigl(\frac{1}{\ell_k} - \frac{1}{\ell_{k+1}}\Bigr)+
 \Bigl(\frac{r}{\ell_{r+1}}+\sum_{i=r+1}^\infty \frac{1}{\ell_i}\Bigr) \, \right ),
$$
and letting $r\to \infty$ here and in (\ref{mink-lo}) yields the desired formula.
\end{proof}

\section{Lower bound for  $\dim_H X^{(S)}_\Om$ in Theorem~\ref{th-var}(i)}

Given a probability measure $\mu$ on $\Om$, we set
\be\label{eq-proba}
\Pmu[u] :=\prod_{i\le |u|,\, (i,S)=1} \mu[u|_{iS}].
\ee
It follows from (\ref{def-Xom}) that $\Pmu$ extends to a Borel probability measure supported on $X_\Om^{(S)}$.

\medskip
 
Fix a probability measure $\mu$ on $\Om$.  Recall that  $\alpha_k$ is the partition of $\Om$ into cylinders of length $k$. We are going to demonstrate that for every $r\in \N$,
\be \label{eq-lb2}
\liminf_{n\to \infty} \frac{-\log_m \Pmu[x_1^n]}{n} \ge
\gam(S)^{-1} \sum_{k=1}^r H^\mu(\alpha_k)\Bigl(\frac{1}{\ell_k} - \frac{1}{\ell_{k+1}}\Bigr)\ \ \mbox{for $\Pmu$-a.e.}\ x.
\ee
Then, letting $r\to \infty$ will yield $\dim_H(X_\Om^{(S)})\ge s(\Om,\mu)$ by Billingsley's Lemma~\ref{lem-Billing}(ii). 

\smallskip

Let us fix an integer $r\geq 1$. Again, to verify (\ref{eq-lb2}), we can restrict ourselves to the integers of the form 
 \be
\label{defn}
n=d \cdot  \Bigl(\, \prod_{j=1}^{r+1} \ell_j\,\Bigr) \cdot \Bigl(\,  \prod_{i=1}^J p_i \,\Bigr ),  \ \ d\in \N.
\ee

In view of (\ref{eq-proba}), we have
\be
\label{eq2}
\Pmu[x_1^n]  \  =  \  \prod_{k=1}^{K(n)}  \prod_{i \in B^{(k)}_n} \mu[x^n_1|_{iS}]  , 
\ee
hence, when $n$ is large enough, 
\be \label{eq-lb3}
\Pmu[x_1^n]   \ 
   \le \  \prod_{k=1}^r \ \prod_{i\in B^{(n)}_k} 
\mu[x_1^n|_{iS}].
        \ee
        Note that $x_1^n|_{iS}$ is a word of length $k$ for $i\in B^{(n)}_k$, which is
        a beginning of a sequence in $\Om$.
        Thus, $[x_1^n|_{iS}]$ is an element of the partition $\alpha_k$.
        
        The random variables $x\mapsto -\log_m \mu[x_1^n|_{iS}]$ are i.i.d. 
        for $i\in B_k^{(n)}$ (hence, there are $|B_k^{(n)}|$ of them), 
            and their expectation  equals $H^\mu(\alpha_k)$, by the definition of entropy.
                    Fixing $k $ with $k\le r$ and taking  $n$ of the form \eqref{defn}, since $|B_k^{(n)}|$ tends to infinity as $d$ tends to infinity, we get an infinite sequence of i.i.d.\ random variables. Therefore, by a version of the Law of Large
                    Numbers,
            \be \label{eq-lb4}
\mbox{ for $\Pmu$-a.e.\ $x$, } \ \forall\ k\le r,\ \ \   \frac{1}{| B_k^{(n)}|} \sum_{i\in B_k^{(n)}}
       -\log_m \mu[x_1^n|_{iS}] \  \longrightarrow  \ H^\mu(\alpha_k)
       \ee
       as $n $ (and thus $d$) tends to infinity.
      Using (\ref{eq-lb3}), we deduce that
       $$ 
       \frac{-\log_m \Pmu[x_1^n]}{n} \ge  \frac 1 n  \sum_{k=1}^r \sum_{i\in B^{(n)}_k}   -\log_m \mu[x_1^n|_{iS}].  $$
      In view of (\ref{eq-bk}),
       we obtain
   $$ 
      \frac{-\log_m \Pmu[x_1^n]}{n} \ge  \sum_{k=1}^r  \gam(S)^{-1}  \Bigl(\frac{1}{\ell_k} - \frac{1}{\ell_{k+1}}\Bigr)   \sum_{i\in B^{(n)}_k}    \frac{ -\log_m \mu[x_1^n|_{iS}]}{| B_k^{(n)}|} . $$
      Taking the liminf as $n$ tends to infinity and using (\ref{eq-lb4}),   we confirm (\ref{eq-lb2})  for $\Pmu$-a.e.\ $x$, completing the proof.

\section{Upper bound for  $\dim_H X^{(S)}_\Om$, and Theorem ~\ref{th-hausd}}

To find the upper bound, we will construct an explicit measure on  $X^{(S)}_\Om$ which has the right dimension. Since we will be able to compute the Hausdorff dimension of this measure $\mu$ (it will be $\log_m\, t_{\emptyset}$), this will allow us to conclude.

\medskip

In view of Lemma~\ref{lem-elem}, we can define the probability measure $\mu$ on $\Om$ such that
\begin{eqnarray}
 \label{def-meas1}
 && \mu[j]:=\frac{t(j)}{t(\varnothing)^{\gam(S)}}\ \ \mbox{for all}\ j\in \Pref_1(\Om),\\
 \label{def-meas2}
\  \ \  \  \ \ \ \forall \, k\ge 1,& & \mu[uj|u]:= \frac{t(uj)}{t(u)^{\ell_{k+1}/\ell_k}}\ \ \mbox{for all}\ u\in \Pref_k(\Om),\ uj\in \Pref_{k+1}(\Om).
\end{eqnarray}
Thus, for every $u=u_1\ldots u_k\in \Pref_k(\Om)$, 
\be \label{def-meas3}
\mu[u]= t(\varnothing)^{-\gam(S)} t(u_1)^{1-\frac{\ell_2}{\ell_1}} \cdots t(u_1\ldots u_{k-1})^{1-\frac{\ell_k}{\ell_{k-1}}} t(u).
\ee

 We are going to use Billingsley's Lemma~\ref{lem-Billing}(i), for which we need to estimate the $\liminf_{n\to \infty}
\frac{-\log_m\Pmu [x_1^n]}{n}$ from above, {\bf for every $x\in X_\Omega^{(S)}$}. 

We will assume throughout the proof that $p_1,\ldots,p_J$ divide $n$.

 Recalling \eqref{eq2}, we need to estimate $\prod_{i\in B_k^{(n)}} \mu[x_1^n|_{iS}] $. By (\ref{def-meas3}), we have
 \begin{eqnarray*}
& &  \prod_{i\in B_k^{(n)}} \mu[x_1^n|_{iS}]    =   \prod_{i\in B_k^{(n)}} \mu[x_{i\ell_1} x_{i\ell_2}\ldots x_{i\ell_k}]\\
  & = &  \prod_{i\in B_k^{(n)}} t(\varnothing)^{-\gam(S)} t(x_{i\ell_1})^{1-\frac{\ell_2}{\ell_1}}  
t(x_{i\ell_1} x_{i\ell_2})^{1-\frac{\ell_3}{\ell_2}} \cdots t(x_{i\ell_1} x_{i\ell_2}\ldots x_{i\ell_{k-1}})^{1-\frac{\ell_k}{\ell_{k-1}}}\\
&&\ \ \ \ \ \ \ 
  \times  t(x_{i\ell_1} x_{i\ell_2}\ldots x_{i\ell_{k}})\\
   & = & t(\varnothing)^{-|B_k^{(n)}|\gam(S)}  \prod_{i\in B_k^{(n)}}    t(x_{i\ell_1} x_{i\ell_2}\ldots x_{i\ell_{k}})     \prod_{k'=1}^{k-1}  t(x_{i\ell_1} x_{i\ell_2}\ldots x_{i\ell_{k'}})^{1-\frac{\ell_{k'+1}}{\ell_{k'}}} \,.
 \end{eqnarray*}
    
    Using   (\ref{eqbkn}), the product can be rewritten as
  \begin{eqnarray*}
\Pmu[x_1^n]   & =  &   t(\varnothing)^{-\beta_n \gam(S)}  \times  \left( \prod_{k=1}^{K(n)}    \prod_{i\in B_k^{(n)}}   \prod_{k'=1}^{k} t(x_{i\ell_1} x_{i\ell_2}\ldots x_{i\ell_{k'}}) \right) \\
&    \times & \left( \prod_{k=1}^{K(n)}    \prod_{i\in B_k^{(n)}}   \prod_{k'=1}^{k-1} t(x_{i\ell_1} x_{i\ell_2}\ldots x_{i\ell_{k'}})  ^{1-\frac{\ell_{k'+1}}{\ell_{k'}}} \right)  .
    \end{eqnarray*}
Observe that if $k$ is given in $\{1, \ldots,K(n)\}$ and    
$(i,S)=1$ with $i\leq n/\ell_k$, then the term $t(x_{i\ell_1} x_{i\ell_2}\ldots x_{i\ell_{k}})$ appears exactly once in the first product above. 
Similarly,  if $k$ is given in $\{1,\ldots,K(n)-1\}$ and    
$(i,S)=1$ with $i\leq n/\ell_{k+1}$, then the term $t(x_{i\ell_1} x_{i\ell_2}\ldots x_{i\ell_{k}})^{1-\frac{\ell_{k+1}}{\ell_{k}}}$ 
appears  once in the second product above. We deduce that
\begin{eqnarray}
 \Pmu[x_1^n] & = &  t(\varnothing)^{-\beta_n \gam(S)} \prod_{k=1}^{K(n)} \prod_{\stackrel{i\le n/\ell_k}{(i,S)=1}}
                        t(x_{i\ell_1} x_{i\ell_2}\ldots x_{i\ell_k}) \nonumber \\
                             & \times & \prod_{k=1}^{K(n)-1} 
                             \prod_{\stackrel{i\le n/\ell_{k+1}}{(i,S)=1}}
                        t(x_{i\ell_1} x_{i\ell_2}\ldots x_{i \ell_k})^{-\ell_{k+1}/\ell_k}.\label{eq-pmu1}
\end{eqnarray}
Therefore, taking Lemma~\ref{lem-betan} into account, we have
\begin{eqnarray}
\frac{-\log_m\Pmu [x_1^n]}{n} & = &  \log_m t(\varnothing) + \sum_{k=1}^{K(n)-1}
  \sum_{\stackrel{i\le n/\ell_{k+1}}{(i,S)=1}} \frac{\ell_k^{-1} \log_m t(x_{i\ell_1} x_{i\ell_2}\ldots x_{i\ell_k})}{n/\ell_{k+1}} 
\nonumber \\
                              & - & \sum_{k=1}^{K(n)}
         \sum_{\stackrel{i\le n/\ell_k}{(i,S)=1}} \frac{\ell_k^{-1} \log_m t(x_{i\ell_1} x_{i\ell_2}\ldots x_{i\ell_k})}{n/\ell_k}\,.
\label{eq-pmu2}
\end{eqnarray}
Denote
\be \label{def-av}
\Av_k(x,s):= \sum_{\stackrel{i\le s}{(i,S)=1}} \frac{\ell_k^{-1} \log_m t(x_{i\ell_1} x_{i\ell_2}\ldots x_{i\ell_k})}{s}\,,
\ee
where $s>0$ is not necessarily in $\N$. Then (\ref{eq-pmu2}) becomes
\be \label{eq-pmu3}
\frac{-\log_m\Pmu [x_1^n]}{n} = \log_m t(\varnothing) + \sum_{k=1}^{K(n)-1}
                            \Av_k\Bigl(x, \frac{n}{\ell_{k+1}}\Bigr) -  \sum_{k=1}^{K(n)}  \Av_k\Bigl(x, \frac{n}{\ell_k}\Bigr). 
\ee

Next, observe that by Lemma~\ref{lem-elem} and the fact that  $ t(u) \in [1, m^{\ell_k(\ell_{k+1}^{-1} +\ell_{k+2}^{-1}+\cdots)}]$ if $|u|=k$, we have
\be \label{eq-esta}
\Av_k(x,s)\le  \sum_{\stackrel{i\le s}{(i,S)=1}} \frac{ (\ell_{k+1}^{-1} +\ell_{k+2}^{-1}+\cdots)  }{s}  \leq \sum_{i=k+1}^\infty \ell_i^{-1}\ \ \ \mbox{for all}\ s>0,
\ee
hence for $r< K(n)$ we have
\begin{eqnarray} 
\sum_{k=r+1}^{K(n)}  \Av_k\Bigl(x, \frac{n}{\ell_k}\Bigr) &  < & \sum_{k=r+1}^\infty \sum_{i=k+1}^\infty \ell_i^{-1} \nonumber \\
                                                      &  < & \sum_{i=r+2}^\infty i \ell_i^{-1}=: \Ek_{r+2}\to 0,\ \ \mbox{as}\ 
r\to \infty. \label{eq-Av2}
\end{eqnarray}
The convergence $\sum_{i=1}^\infty i\ell_i^{-1}<\infty$ is clear, since $\ell_i$ grows faster than any polynomial. The same holds for $\sum_{k=r+1}^{K(n)}  \Av_k\Bigl(x, \frac{n}{\ell_{k+1}}\Bigr) $.

\medskip

 We will estimate from above the averages
$$
\Ak(x,n,M):= \frac{1}{M} \sum_{j=1}^M \frac{-\log_m \Pmu[x_1^{n\ell_j}]}{n\ell_j}.
$$

 \begin{lemma}
 \label{lem-liminf}
For all $x\in X_\Om^{(S)}$, $\liminf_{M\to \infty} \Ak(x,n,M)  \le \log_m t(\varnothing)    $.
\end{lemma}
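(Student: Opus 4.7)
Plan. I will start from the decomposition (\ref{eq-pmu3}), which gives
\[
\Ak(x,n,M) - \log_m t(\varnothing) = \frac{1}{M}\sum_{j=1}^M F(x, n\ell_j),
\]
where, after re-indexing the two inner sums against each other,
\[
F(x,N) = \sum_{k=1}^{K(N)-1}\bigl[\Av_k(x, N/\ell_{k+1}) - \Av_k(x, N/\ell_k)\bigr] - \Av_{K(N)}(x, N/\ell_{K(N)}).
\]
Since the boundary term is non-negative (as all $t$-values are $\ge 1$), it can be discarded for an upper bound on $F$.

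Next, I fix a threshold $r\in\N$ and split the sum in $F$ into a head ($k\le r$) and a tail ($k>r$). By the uniform bound (\ref{eq-esta}) on $|\Av_k|$, the tail contribution to $F$ is dominated by a constant multiple of $\Ek_{r+2}$, which vanishes as $r\to\infty$ by (\ref{eq-Av2}). The problem thus reduces to showing $\liminf_M \frac{1}{M}\sum_{j=1}^M F^{(r)}(x, n\ell_j) \le 0$ for each fixed $r$, where $F^{(r)}$ retains only the head part.

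For each fixed $k\le r$, I will expand $\Av_k(x,s)$ via its definition as a sum over $i$, swap the order of summation between $j$ and $i$ via Fubini, and observe that the weights $\sum_{j\le M,\,\ell_j\ge c}\ell_j^{-1}$ attached to each index $i$ converge as $M\to\infty$ to the finite quantity $\sum_{j:\,\ell_j\ge c}\ell_j^{-1}$, thanks to the summability $\sum_j\ell_j^{-1}=\gam(S)<\infty$ provided by (\ref{eq-fs2}). Combining the rearranged weighted sums across adjacent levels $k$ and $k+1$, and using the defining relation (\ref{eq-elem}) of $\ov t$ in the form $t(u)^{\ell_{k+1}/\ell_k}\ge t(u\,x_{i\ell_{k+1}})$ (valid for $u=x_{i\ell_1}\cdots x_{i\ell_k}$ because $x\in X_\Om^{(S)}$), I aim to extract the non-positive bound on the averaged head in the $M\to\infty$ limit. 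Letting finally $r\to\infty$ then yields the lemma.

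The principal obstacle is this cancellation step: for an arbitrary $x$ the averages $\Av_k(x,s)$ need not converge as $s\to\infty$, so the non-positivity must be extracted from the Fubini-rearranged weighted sums using the finite summability $\sum_j\ell_j^{-1}=\gam(S)$ combined with the multiplicative recursion (\ref{eq-elem}) for the $t$'s, rather than from direct limits of the individual $\Av_k$ terms.
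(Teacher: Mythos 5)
Your reduction is on track up to the head/tail split: starting from (\ref{eq-pmu3}), dropping the non-negative term $\Av_{K(N)}(x,\cdot)$, and killing the tail $k>r$ via (\ref{eq-esta})--(\ref{eq-Av2}) all match the paper. The gap is in the head, and the two mechanisms you propose for extracting non-positivity do not work. First, the inequality you want to import from (\ref{eq-elem}), $t(u)^{\ell_{k+1}/\ell_k}\ge t(u\,x_{i\ell_{k+1}})$, gives $\ell_k^{-1}\log_m t(x_{i\ell_1}\cdots x_{i\ell_k})\ge \ell_{k+1}^{-1}\log_m t(x_{i\ell_1}\cdots x_{i\ell_{k+1}})$, i.e.\ $\Av_k(x,s)\ge \Av_{k+1}(x,s)$. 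Feeding this into $\sum_{k\le r}\bigl[\Av_k(x,N/\ell_{k+1})-\Av_k(x,N/\ell_k)\bigr]$ to telescope in $k$ therefore produces a \emph{lower} bound on the head (namely $\ge \Av_{r+1}(x,N/\ell_{r+1})-\Av_1(x,N)$), which is useless for the upper bound the lemma requires; there is no usable reverse inequality because (\ref{eq-elem}) sums over all continuations. In fact (\ref{eq-elem}) is already fully spent in deriving the exact identity (\ref{eq-pmu3}) --- it is precisely what makes $\mu$ a probability measure --- and no further appeal to it helps here. Second, the Fubini swap plus ``convergence of the weights $\sum_{j\le M,\ \ell_j\ge c}\ell_j^{-1}$'' does not yield cancellation either: for a fixed index $i$ the limiting weights attached to the positive and negative parts are $\ell_{k+1}\sum_{\ell_j\ge i\ell_{k+1}/n}\ell_j^{-1}$ and $\ell_k\sum_{\ell_j\ge i\ell_k/n}\ell_j^{-1}$, which are tail sums over the distinct sets $\ell_{k+1}^{-1}S$ and $\ell_k^{-1}S$ and do not coincide termwise.

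The missing idea is that the cancellation is exact and combinatorial, within a fixed level $k$ but across different $j$'s: whenever $\ell_{k+1}\mid\ell_j$ there is a unique $j'<j$ with $\ell_{j'}/\ell_k=\ell_j/\ell_{k+1}$, so $\Av_k\bigl(x,n\ell_j/\ell_{k+1}\bigr)$ and $\Av_k\bigl(x,n\ell_{j'}/\ell_k\bigr)$ are evaluations of the same function at the same point and cancel identically inside the double sum over $j\le M$. What then has to be proved is a density statement (the paper's Lemma~\ref{lem-divide}): the proportion of $j\le M$ with $\ell_{k+1}\mid\ell_j$ for all $k\le r$ tends to $1$, obtained by counting lattice points $(m_1,\dots,m_J)$ with $\sum_j m_j\log p_j\le\log\ell_M$ against those with all $m_j\ge C$. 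The $O(\eps M)$ unmatched terms are then each bounded via (\ref{eq-esta}), giving the $2\eps\gam(S)$ error before letting $\eps\to0$. Without this matching-plus-density input (or an equivalent aggregate estimate on your Fubini weights, which amounts to the same thing), the heart of the lemma remains unproved.
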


\begin{proof}
Fix $\eps>0$ and choose $r\in \N$ such that $\Ek_{r+2}<\eps$. Choose $n\in \N$  of the form \eqref{defn}. 
   By  (\ref{eq-pmu3})  and (\ref{eq-Av2}),
\be \label{eq-sum}
\Ak(x,n,M)  \le \log_m t(\varnothing) + \frac{1}{M}\sum_{k=1}^r \sum_{j=1}^M \Bigl( \Av_k\bigl(x, \frac{n\ell_j}{\ell_{k+1}}\bigr) -
\Av_k\bigl(x, \frac{n\ell_j}{\ell_{k}}\bigr) \Bigr) + 2\eps.
\ee

We are going to argue that the majority of the terms in each of the interior sums  above cancel out. 
In fact,  when $\ell_{k+1}$ divides $\ell_j$, there exists a unique $j'\ < j$ such that $\frac{\ell_{j'} }{\ell_k} = \frac{\ell_j}{\ell_{k+1}}$,
hence the term   $ \Av_k\bigl(x, \frac{n\ell_j}{\ell_{k+1}}\bigr)$ cancels out with the term $\Av_k\bigl(x, \frac{n\ell_{j'}}{\ell_{k}}\bigr)$. 
For this to happen, all we need is that $\ell_{k+1}$ divide $\ell_j$.

We will show that this occurs for most of the terms in the sum above.
\begin{lemma} \label{lem-divide}
For any $r\in \N$, let $ {F  _M}=\{j \le M:\ \ell_{k+1} | \ell_j,\ \forall\, k=1,\ldots,r\}$.  Then
\be \label{eq-divide}
\lim_{M\to \infty} M^{-1} |F_M|=1.
\ee
\end{lemma}

\begin{proof}
We have
$$
S = \left\{p_1^{m_1}\cdot\ldots\cdot p_J^{m_J}:\ m_1,\ldots, m_J\in \N\cup \{0\}\right\}.
$$
Let $C=C(r)\in \N$ be such that $\ell_2,\ldots,\ell_{r+1}$ all divide $\prod_{j=1}^J p_j^C$. Let us define 
\begin{eqnarray*}
 E_s & =  &  \left\{(m_1,\ldots,m_J)\in \Z_+^J:\ \sum_{j=1}^J m_j\log p_j \le \log( s) \right\}\\
 \widetilde E_s  & =  &  \left\{(m_1,\ldots,m_J)\in 
\Z_+^J:\ \min_j m_j \ge C,\ \sum_{m=1}^J m_j\log p_j \le \log (s) \right\}.
\end{eqnarray*}
For $s=\ell_M$, we have $|E_{\ell_M}| = M$ and 
$$  \widetilde E_{\ell_M} \subset \Big\{(m_1,\ldots,m_J)\in \Z_+^J: \  \ell:=p_1^{m_1}\cdots p_J^{m_J} \leq \ell_M  \mbox{ and } \ell_2,..., \ell_{r+1} \mbox{ $|$ } \ell  \Big\} \subset E_{\ell_M},$$
from which we deduce that $| \widetilde E_{\ell_M} | \leq | F_M | \leq M=|E_{\ell_M}|$.
Then (\ref{eq-divide}) follows from
$$
\lim_{s\to +\infty} \frac{|E_s| }{| \widetilde E_s|}=1,
$$
which is clear.
\end{proof}

 From Lemma~\ref{lem-divide}, we choose $M>0$ so large that 
$M(1-\eps ) \leq |F_M| \leq M .$
 As said above, when $j\in F_M$,   the term $ \Av_k\bigl(x, \frac{n\ell_j}{\ell_{k+1}}\bigr)$ cancels out with some term  $
\Av_k\bigl(x, \frac{n\ell_{j'}}{\ell_{k}}\bigr) \Bigr)$  in \eqref{eq-sum}. 
The cardinality of the remaining terms is thus less than $2M\eps$, 
and each remaining  term is upper bounded  above by $\gam(S)$, see (\ref{eq-esta}), hence (\ref{eq-sum}) yields
$$
\liminf_{M\to \infty} \Ak(x,n,M)  \le \log_m t(\varnothing) +   2\eps \gamma(S)+ 2 \eps. 
$$
for all $x\in X_\Om^{(S)}$. Letting $\eps$ go to zero yields the result. 
\end{proof}

From   Lemma \ref{lem-liminf}, we conclude that  $\liminf_{j\to \infty} \frac{-\log_m\Pmu [x_1^j]}{j}\le \log_m t(\varnothing)  $
for all $x\in X_\Om^{(S)}$. It is key that this inequality holds true for all $x\in X_\Om^{(S)}$, not only for almost all  $x\in X_\Om^{(S)}$. Now we conclude by Billingsley's Lemma~\ref{lem-Billing}(i) 
that $\dim_H X^{(S)}_\Omega \leq \log_m t(\varnothing)$, and the upper bound in Theorem~\ref{th-hausd} is proved.

\medskip

\begin{sloppypar}
\noindent {\em Proof of the lower bound in Theorem~\ref{th-hausd}.} We  deduce the lower bound 
$\dim_H(X_\Om^{(S)}) \ge \log_m t(\varnothing)$ from the lower bound in Theorem~\ref{th-var}(i) and the following lemma, which asserts that the measure constructed in  \eqref{def-meas1} and \eqref{def-meas2} is ``optimal".
\end{sloppypar}

\begin{lemma} \label{lem-comsom}
The measure $\mu$ on $\Om$ defined by (\ref{def-meas1})  and \eqref{def-meas2}  satisfies
\be \label{eq-meas21}
s(\Om,\mu) = \log_m t(\varnothing).
\ee
\end{lemma}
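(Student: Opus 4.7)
The plan is to compute $s(\Om,\mu)$ directly from the second formula in (\ref{def-som}), by expressing each conditional entropy in terms of the single auxiliary sequence
\[
T_k := \sum_{u\in \Pref_k(\Om)} \mu[u]\, \log_m t(u),\qquad k\ge 1,
\]
together with $T_0 := \log_m t(\varnothing)$. The point is that the self-similar structure of $\mu$ (ratios of $t$'s) should interact with the entropy formula in a way that makes the series telescope.

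First I would compute $H^\mu(\alpha_{k+1}\mid \alpha_k)$ for $k\ge 1$. Plugging (\ref{def-meas2}) into the definition of conditional entropy,
\[
H^\mu(\alpha_{k+1}\mid \alpha_k)=-\sum_{u\in \Pref_k(\Om)}\mu[u]\sum_{uj\in \Pref_{k+1}(\Om)}\frac{t(uj)}{t(u)^{\ell_{k+1}/\ell_k}}\Bigl(\log_m t(uj)-\tfrac{\ell_{k+1}}{\ell_k}\log_m t(u)\Bigr).
\]
Using (\ref{eq-elem}) (the sum of $t(uj)$ equals $t(u)^{\ell_{k+1}/\ell_k}$) and the identity $\mu[uj]=\mu[u]\,t(uj)/t(u)^{\ell_{k+1}/\ell_k}$, this collapses to
\[
H^\mu(\alpha_{k+1}\mid \alpha_k) \;=\; \tfrac{\ell_{k+1}}{\ell_k}\, T_k \;-\; T_{k+1}.
\]
An identical computation, now using (\ref{def-meas1}) and (\ref{eq-elem0}), yields $H^\mu(\alpha_1)=\gam(S)\, T_0 - T_1$.

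Next, I substitute into the second expression for $s(\Om,\mu)$ in (\ref{def-som}). Since $\ell_1=1$, the series becomes telescoping:
\[
\gam(S)\, s(\Om,\mu)=\gam(S)\,T_0 - T_1 + \sum_{k=1}^\infty\Bigl(\frac{T_k}{\ell_k}-\frac{T_{k+1}}{\ell_{k+1}}\Bigr)=\gam(S)\,T_0 - \lim_{k\to\infty}\frac{T_{k+1}}{\ell_{k+1}}.
\]
So everything reduces to showing $T_{k+1}/\ell_{k+1}\to 0$, which is the only estimate needed.

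This last step is the one to be careful about, but it is immediate from the a priori bounds (\ref{eq-vec1}) on $\ov{t}$: since $t(u)\in [1, m^{\ell_k(\ell_{k+1}^{-1}+\ell_{k+2}^{-1}+\cdots)}]$ for $|u|=k$, we have $0\le \log_m t(u)\le \ell_k\sum_{i\ge k+1}\ell_i^{-1}$, whence
\[
0\le \frac{T_{k+1}}{\ell_{k+1}}\le \sum_{i\ge k+2}\frac{1}{\ell_i},
\]
and the right-hand side tends to $0$ since $\gam(S)<\infty$. Combined with the telescoping identity, this gives $s(\Om,\mu)=T_0=\log_m t(\varnothing)$, as desired. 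The main (minor) obstacle is just bookkeeping at $k=0$, to make sure the $H^\mu(\alpha_1)$ term matches the $k=0$ slot of the telescope via $\ell_1=1$ and the normalization (\ref{eq-elem0}).
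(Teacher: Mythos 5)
Your proposal is correct and follows essentially the same route as the paper: compute $H^\mu(\alpha_1)$ and $H^\mu(\alpha_{k+1}\mid\alpha_k)$ in terms of the sums $\sum_{u}\mu[u]\log_m t(u)$ using (\ref{eq-elem0}), (\ref{eq-elem}), telescope the series in (\ref{def-som}), and kill the tail term via the a priori bounds (\ref{eq-vec1}). The only difference is cosmetic (your notation $T_k$ for those sums), so there is nothing to add.
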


\begin{proof} We have by (\ref{def-meas1}),
\begin{eqnarray*}
H^\mu(\alpha_1) & = & - \sum_{j=0}^{m-1} \frac{t(j)}{t(\varnothing)^{\gamma(S)}} \log_m\bigl( \frac{t(j)}{t(\varnothing)^{\gamma(S)}}
                         \bigr) \\
                        & = & \gamma(S) \log_m t(\varnothing) - \sum_{j=0}^{m-1} \frac{t(j)}{t(\varnothing)^{\gamma(S)}} \log_m t(j)
                 \\       &  = & \gam(S) \log_m t(\varnothing) - \sum_{j=0}^{m-1} \mu[j]\log_m t(j).
\end{eqnarray*}
Further,
\begin{eqnarray*}
H^\mu(\alpha_{k+1}|\alpha_k) & = & \sum_{[u]\in \alpha_k} \mu[u] \Bigl(- \sum_{j:\,[uj]\in \alpha_{k+1}}
\frac{t(uj)}{t(u)^{\ell_{k+1}/\ell_k}} \log_m \frac{t(uj)}{t(u)^{\ell_{k+1}/\ell_k}}\Bigr) \\
     & = & \sum_{[u]\in \alpha_k} \mu[u] \Bigl(\frac{\ell_{k+1}}{\ell_k} \log_m t(u)- \sum_{j:\,[uj]\in \alpha_{k+1}}
                                \mu[uj|u] \log_m t(uj)\Bigr)\\
                                & = & \frac{\ell_{k+1}}{\ell_k} \sum_{[u]\in \alpha_k} \mu[u] \log_m t(u) - \sum_{[v]\in \alpha_{k+1}}
                                \mu[v]\log_m t(v).
\end{eqnarray*}
Thus,
$$
\frac{H^\mu(\alpha_{k+1}|\alpha_k)}{\ell_{k+1}} = \frac{1}{\ell_k} \sum_{[u]\in \alpha_k} \mu[u] \log_m t(u) 
                                                 - \frac{1}{\ell_{k+1}} \sum_{[v]\in \alpha_{k+1}} \mu[v]\log_m t(v).
$$
Now it is clear that the sum in (\ref{def-som}) ``telescopes''. Note also that 
$$
\frac{1}{\ell_k} \sum_{[u]\in \alpha_k} \mu[u] \log_m t(u)\le \sum_{i=k+1}^\infty \ell_i^{-1} \to 0,\ \ \mbox{as}\ k\to \infty
$$
by Lemma~\ref{lem-elem}. It follows that
$s(\Om,\mu)=\log_m t_\varnothing$, as desired. This completes the proof of the lemma, and of Theorem~\ref{th-hausd}.
\end{proof}

\smallskip

All that remains to prove is the part (ii) of Theorem \ref{th-var}.

\smallskip

\noindent {\em Proof of Theorem~\ref{th-var}.}(ii) Every term in the expression for $\dim_M(X_\Om^{(S)})$ in Theorem~\ref{th-mink} dominates the corresponding term in (\ref{def-som}),
with equality if and only if $\mu$ assigns the same measure to each cylinder of length $k$, for every $k$. This is true for the
``natural'' uniform measure, when $\Pref(\Om)$ is spherically symmetric, and cannot hold otherwise. \qed

\section{Numerics and further examples}

In this section we  introduce a ``geometric'' argument used to determine the Minkowski dimension of several examples, and which allowed us to write an algorithm to produce the values of $A_k$ for large values of $k$. The main idea is that we use a triangular arrangement of the sets $iS$, for  $(i,S)=1$.

Let $p < q\in\mathbb{N}$; $(p,q)=1$ and $S=\langle p, q\rangle$ be the semigroup generated by $p,q$. Let $F\colon\Sigma_2\times\Sigma_2\times\Sigma_2\to\mathbb{R}$ be a function depending only on the first coordinates, i.e. $F(x,y,z)=F(x_1,y_1,z_1)$. We are interested in the sets
\begin{equation}
\label{defF}
X_F:=\left\{x\in\Sigma_2\, :\, F(x_\ell,x_{p\ell},x_{q\ell})=0 \, \  \forall \,  \ell\in\mathbb{N}\right\}.
\end{equation}

In $\mathbb{N}^2$ we consider the  infinite triangular  matrix 
\[
\Delta:=\left\{p^{n-m} q^m\, :\, n\ge m\ge 0 \right\}.
\]
 For instance, when $p=2$ and $q=3$, the matrix $\Delta$ (and $S$ itself) can be represented as 
$$\begin{array}{ccccccccccc}
  &&&&&&& 2187  & ... \\
  &&&&&& 729 & 1458 & 3916  & ...    \\ 
   &&&&& 243 & 686 & 1378 & 2756  & ...    \\
  &&&& 81 & 162 & 324 & 648 & 1296  & ...   \\
  &  & & 27 & 54 & 108 & 216 & 432  & 864  & ...   \\
    &&9&18&36&72&144  & 288 & 576  & ...   \\ 
  & {3}&6&12&24&48&96&192&384 & ...   \\ 
     {1}& {2}&4&8&{16}&32&64&128&256 & ...  
\end{array}  $$
To determine the Minkowski dimension we need to consider the truncated sectors
\[
\Delta_N^i:=\{ p^{n-m} q^m\in\Delta\, :\, ip^{n-m} q^m\le N\},\qquad N\in\mathbb{N}. 
\]
The right-hand-side boundary of this sector is approximately determined by a grid approximation of a line with slope 
\[
\gamma=\frac{\log p}{\log p -\log q}<0.
\]
This ``broken'' line is determined by the Sturmian sequence associated to $\gamma$, as follows. Given an irrational slope $\gamma$ we consider the line $\gamma x$. It will intersect the integer grid consecutively (starting at the origin) in horizontal and vertical segments. It is classical that if $|\gamma|>1$, then for some integer $n_\gamma$ depending on $\gamma$ only,  a sequence of $n_\gamma$ or $n_\gamma+1$ vertical intersections will be followed by a single horizontal intersection. If $|\gamma|<1$ then a sequence of $n_\gamma$ or $n_\gamma+1$ horizontal directions will be followed by a single vertical intersection. The set of  boundary ``squares'' $\partial_r^i(N)$ of the configuration in the truncated triangle are the integer grid squares that have an intersection with the line $\gamma x + N$. These squares are given by
\[
\partial_r^i(N)=\{(n,m)\, :\, ip^{n-m}q^m\le N<ip^{n-m+1}q^m\}.
\]
Denote by $n(i,N)$ the maximum of those integers $n$ for which one can find a pair $(n,m) \in \partial_r^i(N) $. The numbers $m(i,N)$
are defined similarly.

If $p^2>q$, i.e. $|\gamma|>1$ (for instance when $p=2$ and $q=3$), then  they have the structure 
\[
(n_1,m_1),(n_1,m_1+1)\cdots, (n_1,m_1+k_1), (n_2,m_1+k_1),\cdots,(n_2,m_2),\cdots,
\]
 where $n_1=n(i,N)+1$, $m_1=0$, and for $j\geq 2$, $ n_j=n_{j-1}-1$, $ m_j=m_{j-1}+k_{j-1}$ (with $k_{j-1}=n_\gamma$ or $ n_\gamma+1$), until $m_j= n_j=m(i,N)+1$.   The case where $p^2<q$ (i.e. $|\gamma|<1$) is symmetric. 

So we have exactly $(n(i,N)+1)-(m(i,N)+1) = n(i,N)-m(i,N)$ ``steps'' to the left and $m(i,N)+1$ steps up. 
All together we have $n(i,N)+1$ boundary ``squares''. The ``squares'' $(n_j-1,m_j+k_j)\in\partial_r^i(N)$ are called {\em pivotal} if $(n_j,m_j+k_j)\in\partial_r^i(N)$. There are exactly $n(i,N)-m(i,N)$ such squares.

For instance, when $p=2$, $q=3$, each truncated section is a ``triangle" of right slope $-\log 2/(\log 3-\log 2)$. When $N=27$ and $N=243$, the truncated triangles are
 $$\begin{array}{cccccccccccccccccccc}
    &&&&&&&&&&&&& & 243\\
    &&&&&&&&&&&&& 81 &162 \\
     &  & & 27    &&&&    && &  & & 27 & 54 &108&216 \\
    &&9&18     &&&&   &&&&9&18&36&{\bf 72} &144\\ 
  & {3}&6&12&24   & &&&&     & {3}&6&12&{\bf 24}&{\bf 48}&96&192  \\ 
     {1}& {2}&4&8&{16}    &&&&&       {1}& {2}&4&8&{16}&32&64&128 
\end{array}  $$
 
\begin{center}
Truncated triangles $\Delta^i_N$  for $i=1$, $N=27 $ and $N=243$. In bold characters,\\
the three integers $(24, 2\times 24, 3\times 24)$ located in a ``corner".
\end{center}

Using this representation, one observes that the conditions on the three digits   $(x_\ell,x_{p\ell},x_{q\ell})$ in the sets $X_F$  from \eqref{defF} are then expressed in terms of three consecutive terms located in a ``corner", since the integers $(n,m)$ corresponding to  $(\ell,p\ell,q\ell)$ always have the same relative locations inside the truncated sections $\Delta^i_N$ (see the example above).

Let us return to our main example: $S = \langle 2, 3 \rangle$,   $\gamma(S)=3$   with
 \begin{eqnarray*}
 X_{2,3} & =  & \left\{{(x_k)}_{k=1}^\infty \in \Sig_2:\ x_k x_{2k} x_{3k}=0\ \forall\,k\in \N\right\}.\\
\Om & = & \left\{{(\om_k)}_{k=1}^\infty  \in \Sig_2:\ \om_i \om_j \om_k=0\ \mbox{if}\ 2\ell_i=\ell_j,\ 3 \ell_i = \ell_k\right\}.
\end{eqnarray*}
 The table below lists the first elements of $S$, denoted $\ell_k$, and the corresponding 
$A_k=|\Pref_k(\Om)|$,  needed to compute the
Minkowski dimension, see Theorem~\ref{th-mink}. 

\smallskip

\begin{eqnarray*}\begin{array}{|c|c|c|c|c|c|c|c|c|c|c| c|c|c|c|c|}\hline
 k & 1  & 2 &  3  & 4  & 5 & 6 & 7 &8 &9 &10 & 11 & 12& 13   & 14 & 15 \\
 \hline 
 \ell_k & 1 & 2 & 3 & 4& 6 & 8 & 9 & 12 & 16 & 18   & 24 & 27& 32 & 36& 48 \\
 \hline
  A_k & 2 & 4 & 7 & 14 & 25& 50 & 90 & 160 & 320 & 584 & 1039 & 1861& 3722 & 6772& 12050 \\
  \hline 
\end{array} \\
 \begin{array}{|c|c|c|c|c|c|c|c|c|c|c| c|c| c|}\hline
 k   & 16 & 17 &18 &19  & 20 & 21   & 22& 23 & 24 \\
 \hline 
 \ell_k   & 54& 64 & 72 & 81  & 96  & 108 & 128 & 144 & 162 \\
 \hline
  A_k  & 21909 & 43818 & 79784 & 143028   & 254433& 461161 & 922322& 1679220 & 3055130\\
  \hline 
\end{array}\\
\begin{array}{|c|c|c|c|c|c|c|c|c|c|c| c|c| c|}\hline
 k & 25 & 26& 27   & 28  & 29 & 30 & 31   \\
 \hline 
 \ell_k &  192 & 216& 243 & 256 & 288 & 324 &  384       \\
 \hline
  A_k& 5434757 & 9855663 & 17665509 & 35331018 & 64326200 & 116676724 &  207555865  \\
  \hline 
\end{array}
\end{eqnarray*}

 \medskip

The algorithm to determine $\Pref_{k}(\Om)$ is based on  the ``triangle" representation.  Consider an integer $k$ and  $\ell_k$, and then the corresponding truncated triangle $\Delta^1_{\ell_k}$ (for instance for $k=12$ (equivalently, for  $N=27$) as in the figure above). A word in $\Pref_{k}(\Om)$ can be represented as an array of zeros and ones, whose entries are located at the grid vertices in the truncated triangle. Moreover, the condition $x_k x_{2k} x_{3k}=0$ means that there are no three consecutive ones in a ``corner".
 $$\begin{array}{ccccccccccccccccccccccccccc}
     &  & & 27    &&&    && &  & & 1&&&    && &  & & 1 \\
    &&9&18     &&& \longrightarrow  &&&&0&{\bf 1}&     &&&    &&&0&1 \\ 
  & {3}&6&12&24   & &&&     & {0  }    & {\bf 1}   & {\bf 1}  & 1 &   &    &&&   {0  }& 1   & 0  & 1  \\ 
     {1}& {2}&4&8&{16}    &&&&       {1}& {0}&1&0&{1}  &&&&       {1}& {0}&1&0&{1}
     \end{array}  $$
\begin{center}
Truncated triangle with   $k=12 $, one bad and one good configuration.  \end{center}
\smallskip
\smallskip
In the algorithm, we first generate all possible first two lines of 0-1 bits (which yields $2^9$ configurations when $k=12$) and then keep only those which are admissible (the key point is the simplicity of the selection procedure, i.e.\ no three ones in a corner). Then we generate the third line, and keep only the suitable configurations. And so on. This argument yields the values of $A_k$ in a very reasonable time, up to $k=31$.

Actually it is a very interesting combinatorial problem in itself to determine the number of admissible configurations $A_k$, even in a simpler geometrical context (for instance in a $N\times N$-square, with the forbidden ``corner"). There is numerical evidence that $A_k$ has a power law of $k$, but to confirm this would certainly require further investigations.

\medskip

It is not hard to show that
$ 7/4 \leq A_{k+1}/A_k \le 2$. Using this and the data in the table, we obtain for the Minkowski dimension (Theorem \ref{th-mink})
$$
0.9573399 \le \dim_M(X_{2,3}) \le 0.9623350.
$$

To estimate the Hausdorff dimension we can use Theorem~\ref{th-hausd}. We get explicit rigorous estimates by going to a fixed level $n$, and
either assuming that there are no restrictions further on, whence $t(u)=2^{\ell_n(\ell_{n+1}^{-1} + \cdots)},\ |u|=n$, to get an upper bound, or
to assume that all the digits that follow are 0's, whence $t(u)=1$, to get a lower bound. Then all the values of $t(u)$, with $|u|\leq n-1$, are obtained recursively  using \eqref{eq-elem} and \eqref{eq-elem0}.
We did the calculation with $n=25$ and obtained
$$
0.9246585  < \dim_H(X_{2,3}) < 0.9405728
$$
(the convergence is slow, but we think that the upper bound is closer to the truth).

\begin{sloppypar}
\subsection{Further examples}
The difficulty with the set $X_{2,3}$ and the function $F(x,y,z)=x_1y_1z_1$  comes essentially from the fact that,  fixing the bits at the frontier of the truncated triangles is not enough  to deduce the values of all digits inside the triangle (there is some long-range dependence between the bits). For some suitable functions $F$ and the corresponding sets $X_F$, this is not the case, and the situation is easier.
\end{sloppypar}

\medskip

\noindent {\bf Definition.} {\em
We call a function $F$ {\em deterministic} if for all $i,j\in\{0,1\}$ there is a unique solution $k\in\{0,1\}$ for one of the following implicit equations
$$i)  \ F(i,j,k)=0,  \ \  \ \ ii) \ F(i,k,j)=0 , \ \  \ \ iii) \ F(k,i,j)=0.$$}

The existence of the solution simply means that the constraint on the configurations is well posed and the uniqueness is simply the solvability of the implicit function equation.
Now we can formulate the following ``rigidity'' theorem.

\begin{theorem}
Let $p < q\in\mathbb{N}$, $(p,q)=1$, and let $F\colon\Sigma_2\times\Sigma_2\times\Sigma_2\to\mathbb{R}$ be a ``deterministic" function $F(x,y,z)$ depending only on the first coordinates of $x,y,z
\in \Sig_2$.
  Then recalling  the definition \eqref{defF} of $X_F$, one has 
  \begin{equation}
\label{dimtriangle}
\dim_H(X_F)=\dim_M(X_F)=\frac{q-1}q.
\end{equation}
\end{theorem}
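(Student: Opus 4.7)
The plan is to exploit the deterministic property of $F$ to force enough structure on the tree $\Pref(\Om)$ that both dimensions become computable. I will work out case (i) of the definition in detail; the other two cases are outlined at the end.

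First I would set up, for each $i$ with $(i,S)=1$, the ``triangle'' $\Delta^i_n=\{(a,b)\in \Z_{\ge 0}^2:\ ip^aq^b\le n\}$ parametrizing the orbit $iS$ restricted to $\{1,\ldots,n\}$, so that the constraint $F(x_\ell,x_{p\ell},x_{q\ell})=0$ becomes a relation between entries at $(a,b),(a{+}1,b),(a,b{+}1)$. Because $F$ is deterministic, each relation whose three positions all lie in $\Delta^i_n$ cuts the number of admissible configurations in half, and a direct computation using $p<q$ shows the number of such relations equals $|\Delta^i_n|-(\lfloor\log_p(n/i)\rfloor+1)$. Summing logarithms over orbits and reindexing by $j=ip^a$,
$$
\log_2 N_n(X_F)\le\sum_{\substack{(i,S)=1\\ i\le n}}(\lfloor\log_p(n/i)\rfloor+1)=|\{j\le n:\ q\nmid j/p^{v_p(j)}\}|\sim n\,\frac{q-1}{q}
$$
by a sieve identical in spirit to Lemma~\ref{lem-betan}. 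This gives $\dim_M(X_F)\le (q-1)/q$ independently of which of (i)--(iii) holds.

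For the matching lower bound in case (i), I would verify that $\Pref(\Om)$ is spherically symmetric. At step $k{+}1$, the only earlier-bit relation touching $\omega_{k+1}$ is $F(\omega_m,\omega_{m'},\omega_{k+1})=0$, which occurs exactly when $\ell_{k+1}=q\ell_m$; since $p<q$, both $\ell_m$ and $\ell_{m'}=p\ell_m$ are strictly less than $\ell_{k+1}$ and so are past. Case (i) determinism then forces $\omega_{k+1}$ uniquely when $q\mid\ell_{k+1}$ and leaves it free otherwise, uniformly in the prefix. Therefore $A_{k+1}/A_k=2$ if $q\nmid\ell_{k+1}$ and $=1$ if $q\mid\ell_{k+1}$; since $\{\ell_k:q\nmid\ell_k\}=\{p^a\}_{a\ge 0}$, the second formula of Theorem~\ref{th-mink} telescopes to
$$
\dim_M(X_F)=\gam(S)^{-1}\Bigl(1+\sum_{a\ge 1}p^{-a}\Bigr)=\gam(S)^{-1}\,\frac{p}{p-1}=\frac{q-1}{q},
$$
using $\gam(S)=\frac{pq}{(p-1)(q-1)}$ from \eqref{eq-fs2}. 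Spherical symmetry and Theorem~\ref{th-var}(ii) then give $\dim_H(X_F)=\dim_M(X_F)=(q-1)/q$.

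Case (iii) is handled symmetrically: the implicit relation $x_\ell=h(x_{p\ell},x_{q\ell})$ still has both determining positions smaller than $\ell_{k+1}$ whenever the active past constraint appears, so the spherical-symmetry analysis goes through verbatim with the same ``free iff $q\nmid\ell_{k+1}$''. Case (ii) is the main obstacle: here the determined index $p\ell$ lies strictly between $\ell$ and $q\ell$ in the $\ell$-order, so the only past-affecting constraint on $\omega_{k+1}$ has the form $\omega_{p\ell_{k+1}/q}=g(\omega_{\ell_{k+1}/q},\omega_{k+1})$ and its number of solutions depends on whether $g(\omega_{\ell_{k+1}/q},\cdot)$ is injective, violating uniform branching. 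To recover the equality I would apply Theorem~\ref{th-var}(i) with the measure that, at each vertex of $\Pref(\Om)$ with two extensions, distributes mass so that the computation leading to \eqref{def-meas3} still telescopes as in Lemma~\ref{lem-comsom}; the resulting entropy $s(\Om,\mu)$ should again equal $(q-1)/q$, matching the upper bound above.
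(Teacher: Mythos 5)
Your treatment of case (i) is correct and is essentially a more explicit version of what the paper does: the paper's proof likewise only works out case (i) in detail, arguing that the bottom row of each triangle may be chosen freely, whereas you verify spherical symmetry of $\Pref(\Om)$ directly and make the second formula of Theorem~\ref{th-mink} telescope to $(q-1)/q$. Your counting upper bound $\dim_M(X_F)\le (q-1)/q$ is also sound in all three cases, provided you justify ``each relation cuts the count in half'' by ordering the relations so that each determines a fresh coordinate from coordinates already processed (decreasing $b$ for (iii), increasing $a$ for (ii), increasing $b$ for (i)); as stated it is an assertion, not an argument.

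The genuine gap is in cases (ii) and (iii). Your claim that case (iii) ``goes through verbatim'' is false: in case (iii) the unique constraint lying entirely in the past at step $k+1$ (the one with $\ell_{k+1}=q\ell_m$) determines the \emph{past} coordinate $\om_m$ from $\om_{m'}$ and the new coordinate $\om_{k+1}$, so the number of admissible values of $\om_{k+1}$ depends on the already-fixed pair $(\om_m,\om_{m'})$; in addition, the constraint indexed by $\ell_{k+1}/p$, whose third coordinate lies in the future, can also restrict $\om_{k+1}$. Concretely, take $p=2$, $q=3$ and $F(x,y,z)=(x-yz)^2$ (the paper's own example of a function fulfilling (iii)), i.e.\ $x_\ell=x_{2\ell}x_{3\ell}$. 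Then $\Pref_2(\Om)=\{00,01,11\}$, since $\om_1=1,\om_2=0$ cannot be completed ($1=0\cdot\om_3$ is impossible): the vertex $0$ has two children and the vertex $1$ has one, so $\Pref(\Om)$ is not spherically symmetric and your branching rule $A_{k+1}/A_k\in\{1,2\}$ already fails at $k=1$. Note that by the ``only if'' direction of Theorem~\ref{th-var}(ii) this forces $\dim_H<\dim_M$ for this $F$, so no choice of measure in Theorem~\ref{th-var}(i) can restore the equality; the same difficulty is hidden in the paper's one-line assertion that the boundary values of each triangle can be chosen ``freely'' in cases (ii) and (iii), which the two-point triangle $\{1,2\}$ above already contradicts (three admissible configurations, not four). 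Your case (ii) is likewise only a hope (``should again equal $(q-1)/q$''), not a proof. In sum, the proposal establishes the statement only for functions fulfilling (i), and the obstruction you meet in the other cases is real rather than a defect of your particular method.
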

\begin{proof}
A moment's thought makes it clear that for a function fulfilling i) any configuration in $\Delta_N^i$ is uniquely determined by choosing the $n(i,N)+1$ boundary values $(n,m)=(0,0), (1,0), \cdots ,( n(i,N),0)$ freely. For a function fulfilling ii), any configuration in $\Delta_N^i$ is uniquely determined by the free choice of the upper boundary values $(n,m)=(0,0), (1,1), (2,2),\cdots ,(m(i,N),m(i,N))$ and the pivotal ``squares''.  For a function fulfilling iii), any configuration is uniquely determined by the free choice of the values in $\partial_r^i(N)$. In all cases,  there are exactly $2^{n(i,N)+1}$ configurations   in $\Delta_N^i$.

The common dimension can easily be computed, we explain it for a function $F$ fulfilling i). Given an admissible sequence    $x\in X_F$, observe that we can choose all the values $x_i$ arbitrarily as long as $i$ is not divisible by $q$. The values at positions $qj$, $j \geq 1$, are then completely determined. Hence, exactly $\frac{q-1}q$ of the positions can be arbitrary and the rest is determined. This immediately implies, by standard argument, that
\[
 \dim_M(X_F)=\frac{q-1}q.
\] 
 The Minkowski and the Hausdorff dimension coincide as can be derived from part (ii) of Theorem \ref{th-var}.
\end{proof}

\begin{example}\label{e1}
The function  $ 
F(x,y,z)=(2x_1-1)(2y_1-1)(2z_1-1)-1$ is ``deterministic", hence \eqref{dimtriangle} holds.
This case is   reminiscent of the well-studied Ledrappier shift, i.e.  each of the triples $(x_\ell,x_{p\ell},x_{q\ell})$ has an even number of $0$'s, or equivalently
$$ x_\ell+ x_{ p\ell} + x_{q\ell}  = 1  \  \ \mbox{(mod 2)}.$$
  \end{example}

\begin{example}
For  $
F(x,y,z)=(y-xz)^2
$ (which  fulfills ii) but not i) or iii)) and   $F(x,y,z)=(x-yz)^2$ (which  fulfills iii) but not i) or ii)),  \eqref{dimtriangle} is satisfied. \end{example}

\begin{example}
Consider $F(x,y,z)=(x-y)^2+(x-z)^2. $ This function is not ``deterministic". In this case, each triangle $\Delta_N^i$ has exactly 2 configurations: all 0's or all 1's.
Hence the total number of cylinders of length $N$ equals the number of non-empty triangles $\Delta_N^i$. Since $(i,S)=1$ this number equals the number of $i$'s that are neither divisible by $p$ nor by $q$. We have exactly $p+q-1$ residue classes modulo $pq$ that are divisible by $p$ or $q$. Therefore the numbers $i$ with $(i,S)=1$ can be divided into $pq-p-q+1$ arithmetic sequences of step length $pq$. Hence
\[
\dim_M(X_F)=1-\frac{p+q-1}{pq}.
\]
In this case, the Hausdorff dimension of $X_F$ coincides with its Minkowski dimension, using the argument (ii) of Theorem \ref{th-var}: each   prefix  (with our interpretation, each finite triangle)  has only one possible continuation.
\end{example}

%


%
 

\end{document}